\theoremstyle{plain}
\newtheorem{question}{Question} 
\newtheorem{thm}{Theorem}
\newtheorem{defn}{Definition}[section]
\newtheorem{cor}[defn]{Corollary}
\newtheorem{prop}[defn]{Proposition}
\newtheorem{remark}[defn]{Remark}
\newtheorem{lm}[defn]{Lemma}
\newtheorem{ep}[defn]{Example}
\begin{document}
	
	\title{Extensions of immersions of surfaces into $\mathbb{R}^{3}$}
	\author{Bojun Zhao}
	\maketitle
	
	ABSTRACT: 
	This paper is to study the $\mathbb{R}^{3}$ case of \hyperref[Zhao]{[9]}.
	We determine all equivalence classes of immersed $3$-manifolds bounded by an arbitrary immersed surface in $\mathbb{R}^{3}$.

\section{Introduction}\

In this paper,
we assume all $3$-manifolds are oriented,
and the $3$-manifolds will be connected if not otherwise mentioned.
We assume all immersions are transverse immersions,
and all graphs have no isolated point.
We work in PL category: all $3$-manifolds are assumed to have a PL structure, and all maps (between $3$ manifolds) are assumed to be PL maps.

Fix a closed oriented surface $\Sigma$ and an immersion $f: \Sigma \to \mathbb{R}^{3}$.
We say an immersion $F: M \to \mathbb{R}^{3}$ ($M$ a compact, connected $3$-manifold with boundary $\Sigma$) \emph{extends} $f$
if $F \mid_{\partial M} = f$ (toward the side that inward normal vectors point to).

\begin{defn}\rm
	Let $\Sigma$ be a closed oriented surface and $f: \Sigma \to \mathbb{R}^{3}$ an immersion.
	Assume $g_1: M_1 \to \mathbb{R}^{3},g_2: M_2 \to \mathbb{R}^{3}$ are $2$ extensions of $f$.
	$g_1,g_2$ are \emph{equivalent} if there exists a (PL) homeomorphism $h: M_1 \to M_2$
	such that $g_1 = g_2 \circ h$.
	(see \hyperref[Pappas]{[7, Section 2]}, while it states this definition in smooth category)
\end{defn}

\begin{question}\label{question 1}\rm 
	Which immersed closed oriented surfaces in $\mathbb{R}^{3}$ bound immersed $3$-manifolds, and in how many inequivalent ways?
\end{question}

The $2$-dimensional problems were solved by S. Blank (\hyperref[Blank]{[3]}, for immersed disks bounded by the immersed planar circle), K. Bailey (\hyperref[Bailey]{[2]}, for immersed surfaces bounded by the immersed planar circle).
But their algebraic approach does not readily generalize (see \hyperref[Pappas]{[7, Section 1]}).

In \hyperref[Zhao]{[9]} we presented the technique in $2$-dimensional case.
We answer Question \ref{question 1} in this paper:
Given an immersed surface in $\mathbb{R}^{3}$,
we determine all equivalence classes of immersed $3$-manifolds bounded by it (Theorem \ref{main theorem}).

The following question provides the author the basic motivation to accomplish this paper. 
It includes the request to determine the equivalence classes of immersed $3$-balls bounded by the immersed $2$-spheres.

\begin{question}\label{question 2}\rm \hyperref[Kirby]{[5, Problem 3.19]}
	Which immersed $2$-spheres in $\mathbb{R}^{3}$ bound immersed $3$-balls?
\end{question}

By applying the algorithm \hyperref[Preaux]{[8]} after determining all inequivalent immersed $3$-manifolds bounded by an immersed $2$-sphere,
we can determine all inequivalent immersed $3$-balls bounded by the immersed $2$-sphere (Corollary \ref{immersed ball}).

\subsection{Main results}\

Fix a closed oriented surface $\Sigma$ and an immersion $f: \Sigma \to \mathbb{R}^{3}$.
$f$ can't extend if there exists $x \in \mathbb{R}^{3} \setminus f(\Sigma)$ such that $\omega(f,x) < 0$ (where $\omega(f,x)$ denotes the winding number of $f$ around $x$, see Deginition \ref{winding number}).
If $\omega(f,x) \geqslant 0$ for every $x \in \mathbb{R}^{3} \setminus f(\Sigma)$,
an inscribed set $\zeta$ of $f$ (Definition \ref{inscribed set}) is a finite set, 
and $I(\zeta)$ (Definition \ref{good complex set}) is a subset of $\zeta$ ($\zeta,I(\zeta)$ exist, and they can be obtained in finite steps).

\begin{thm}\label{main theorem}
	For a closed oriented surface $\Sigma$,
	let $f: \Sigma \to \mathbb{R}^{3}$ be an immersion such that $\omega(f,x) \geqslant 0, \forall x \in \mathbb{R}^{3} \setminus f(\Sigma)$.
	Assume $\zeta$ is an inscribed set of $f$.
	Then there exists a bijection between $I(\zeta)$ and all equivalence classes of immersions of $3$-manifolds to extend $f$.
\end{thm}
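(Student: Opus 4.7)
\medskip

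\noindent\textbf{Proof proposal.} The plan is to establish the bijection by constructing explicit maps in both directions and verifying that they are mutually inverse. The forward map sends an equivalence class of extensions to a combinatorial label in $I(\zeta)$ read off from how the extension covers the complementary regions of $f(\Sigma)$; the backward map reconstructs an extension by a gluing procedure governed by that label.

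For the forward direction, given an extension $F: M \to \mathbb{R}^{3}$ of $f$, I would look at the preimages $F^{-1}(R)$ of the connected components $R$ of $\mathbb{R}^{3} \setminus f(\Sigma)$. Because $F$ is a PL immersion which is injective on a neighborhood of $\partial M = \Sigma$, each $F^{-1}(R)$ is a (possibly disconnected) $3$-manifold covering $R$ with total degree equal to the winding number $\omega(f,R) \geqslant 0$; these pieces are glued to one another along the preimages of the double curves, and the gluings are matched consistently around the preimages of the triple points of $f(\Sigma)$. I would show that the resulting combinatorial pattern of sheets and gluings is exactly the data recorded by an element of the inscribed set $\zeta$, and that the connectedness of $M$ together with the PL-manifold condition force this element to lie in the subset $I(\zeta)$. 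Since two equivalent extensions differ only by a PL homeomorphism covering the identity of $\mathbb{R}^{3}$, the induced element of $I(\zeta)$ is independent of representative.

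For the backward direction, starting from $\iota \in I(\zeta)$, I would perform a local-to-global construction: over each complementary region $R$ take $\omega(f,R)$ disjoint copies ($3$-balls up to isotopy, or more precisely lifts of $R$), then glue the copies along preimages of the $2$-dimensional strata of $f(\Sigma)$ as prescribed by $\iota$, checking that the gluings around each triple point of $f$ close up compatibly. The defining conditions of $I(\zeta)$ should be arranged precisely so that this produces a PL $3$-manifold $M_\iota$ (rather than a space with non-manifold points along preimages of the singular set), that $M_\iota$ is connected, and that the tautological map $F_\iota: M_\iota \to \mathbb{R}^{3}$ is an immersion extending $f$. I would then verify that applying the forward map to $F_\iota$ returns $\iota$, and conversely that applying the backward map to the data extracted from any extension $F$ yields an extension equivalent to $F$ via the obvious homeomorphism assembled piecewise from the identifications of sheets; this gives the bijection.

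The main obstacle will be the consistency check at the triple points of $f$, where four local octants of $\mathbb{R}^{3}$ meet and several sheets of $F^{-1}$ must interlock correctly. This is the step where the distinction between an arbitrary element of $\zeta$ and an element of $I(\zeta)$ must play its role: the defining conditions for $I(\zeta)$ should be exactly the combinatorial constraints necessary and sufficient for the gluing recipe to yield a PL manifold. A subsidiary difficulty is to make the backward construction sufficiently canonical that the composition of the two maps is the identity on equivalence classes, which I expect to handle by a PL uniqueness argument for the local models over each region together with the observation that different parametrizations of the sheets differ by a homeomorphism of $M_\iota$ over $\mathbb{R}^{3}$.
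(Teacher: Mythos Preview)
Your proposal rests on a picture of $I(\zeta)$ that does not match its actual definition, and this is a genuine gap rather than a cosmetic one. You treat an element of $I(\zeta)$ as a direct labeling of sheets of $F^{-1}(R)$ over each complementary region $R$, together with gluing data along the double locus. But the inscribed set $\zeta$ in Definition~\ref{inscribed set} is something quite different: it is built \emph{inductively on the winding-number levels} $k=n,n-1,\ldots,1$, and at level $k$ one has already \emph{fixed} a particular $(D_k(f),X_{k+1},G_k(f)\cup N)$-simple $2$-complex $\tilde X_k$ (depending on the choices $X_{k+1},\ldots,X_n$ made at higher levels). An element of $\zeta$ then records, at each level, which good subcomplex $X_k\in sub_{X_{k+1}}(\tilde X_k)$ is selected; the condition $X_1=\emptyset$ singles out $I(\zeta)$. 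There is no evident way to ``read off'' such an $X_k$ from the sheet decomposition you describe, because $\tilde X_k$ was chosen once and for all in constructing $\zeta$ and need not coincide with anything visible in $F^{-1}$ of the regions.

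The paper's route to surjectivity (Proposition~\ref{surjective}) confronts exactly this issue and is not the direct sheet-matching you sketch. Starting from an extension $g:M\to\mathbb{R}^3$, one uses the fixed $\tilde X_n$ to locate cancellable domains $A_{(n,1)},\ldots,A_{(n,t_n)}$ in $M$, records $X_n=X(A_{(n,1)},\ldots,A_{(n,t_n)})$, and performs a cancellation operation that replaces $(g,M)$ by a \emph{branched immersion} $g_{n-1}:K_{n-1}\to\mathbb{R}^3$ of a \emph{branched $3$-manifold}. One then iterates, producing a chain of branched immersions down to an embedding $g_1$, and along the way records $X_{n-1},\ldots,X_1$. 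The branched-immersion formalism is not incidental: at the intermediate stages the objects are genuinely not manifolds, and the regularity of each cancellation (needed to keep the bookkeeping coherent, particularly induction hypothesis~(iv)) is what makes the procedure land in $I(\zeta)$. Your plan contains no analogue of this descent, and the triple-point consistency check you flag as the ``main obstacle'' is in the paper handled precisely by the local models of Definition~\ref{relative} together with the regularity argument, not by an ad hoc combinatorial constraint. For injectivity (Lemma~\ref{injective}) the paper also uses the layered structure: one finds a $2$-cell $\alpha\subseteq\tilde X_k$ lying in exactly one of $X_k,Y_k$ and builds a properly embedded arc in one extension that has no counterpart in the other. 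To repair your approach you would need to explain concretely how an arbitrary extension determines, at each level $k$, a good subcomplex of the already-chosen $\tilde X_k$.
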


\hyperref[Preaux]{[8]} (or, see \hyperref[AFW]{[1, Section 4, C.29, C. 30]})
provides an algorithm to detect if a $3$-manifold with boundary $S^{2}$ is a $3$-ball.
We apply this to determine the immersed $3$-balls in an immersed $2$-sphere:

Assume $\Sigma = S^{2}$.
Assume $E(f)$ is the set of equivalence classes of immersions of $3$-manifolds to extend $f$.
Then Theorem \ref{main theorem} gives a bijection $q: I(\zeta) \to E(f)$.
For each $A \in I(\zeta)$,
choose $g_A: M_A \to \mathbb{R}^{3}$ an extension to represent the equivalence class $q(A) \in E(f)$.
Definition \ref{inscribed map} provides $M_A$ a trangulation (determined by $A$).
By applying \hyperref[Preaux]{[8]},
we can detect if $M_A$ is a $3$-ball.
Hence we can detect $I_0(\zeta) = \{A \in I(\zeta) \mid M_A \cong B^{3}\}$.

\begin{cor}\label{immersed ball}
	Let $f: S^{2} \to \mathbb{R}^{3}$ be an immersion such that $\omega(f,x) \geqslant 0, \forall x \in \mathbb{R}^{3} \setminus f(\Sigma)$.
	Assume $\zeta$ is an inscribed set of $f$.
	Then there is a bijection between $I_0(\zeta)$ and all equivalence classes of immersions of $3$-balls to extend $f$.
\end{cor}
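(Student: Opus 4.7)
\medskip

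\noindent\textbf{Proof proposal.}
The plan is to read off the corollary as a direct restriction of the bijection
$q: I(\zeta) \to E(f)$ supplied by Theorem~\ref{main theorem}, using only the
observation that ``the domain is a $3$-ball'' is an invariant of the
equivalence class of an extension.

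First I would set $E_0(f) \subseteq E(f)$ to be the subset of those equivalence
classes that admit a representative $h: B^{3} \to \mathbb{R}^{3}$, and show
$q(I_0(\zeta)) = E_0(f)$. The inclusion $q(I_0(\zeta)) \subseteq E_0(f)$ is
immediate from the construction of $M_A$: if $A \in I_0(\zeta)$ then by
definition $M_A \cong B^{3}$, and $g_A: M_A \to \mathbb{R}^{3}$ is a
representative of $q(A)$, so $q(A)$ is the class of an immersion of a
$3$-ball. For the reverse inclusion, suppose $q(A) \in E_0(f)$ and pick any
representative $h: B^{3} \to \mathbb{R}^{3}$ of $q(A)$. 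Since $h$ and $g_A$ are
equivalent extensions of $f$, Definition~1.1 supplies a PL homeomorphism
$B^{3} \to M_A$; hence $M_A \cong B^{3}$, i.e.\ $A \in I_0(\zeta)$. Thus
$q(I_0(\zeta)) = E_0(f)$, and since $q$ is a bijection on the whole of
$I(\zeta)$ its restriction to $I_0(\zeta)$ is a bijection onto $E_0(f)$, which
is the content of the corollary.

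The only subtle point is that the set $I_0(\zeta)$ as defined appears to
depend on the arbitrary choices of the representatives $g_A: M_A \to
\mathbb{R}^{3}$. I would dispose of this by noting that if $g_A': M_A' \to
\mathbb{R}^{3}$ is any other representative of $q(A)$, then $M_A$ and $M_A'$
are PL homeomorphic by the definition of equivalence, so ``$M_A \cong B^{3}$''
depends only on $q(A)$, not on the chosen $g_A$. Therefore $I_0(\zeta)$ is
well-defined.

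Finally, I would remark that the set $I_0(\zeta)$ is effectively computable:
Definition~\ref{inscribed map} (referenced in the paragraph preceding the
corollary) equips each $M_A$ with an explicit triangulation determined by
$A \in I(\zeta)$, and the algorithm of~\hyperref[Preaux]{[8]} decides whether
a triangulated $3$-manifold with boundary $S^{2}$ is a $3$-ball. Running this
algorithm on each $M_A$ filters out $I_0(\zeta)$ from $I(\zeta)$ in finitely
many steps. I do not expect a genuine obstacle here; the substantive work has
already been absorbed into Theorem~\ref{main theorem} and into the recognition
algorithm of~\hyperref[Preaux]{[8]}, and the corollary is essentially a
bookkeeping statement over those two inputs.
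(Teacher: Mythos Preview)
Your proposal is correct and matches the paper's approach exactly: the paper does not give a separate formal proof of the corollary at all, but treats it as immediate from the paragraph preceding its statement, where the bijection $q: I(\zeta) \to E(f)$ from Theorem~\ref{main theorem} is restricted to those $A$ with $M_A \cong B^{3}$, using that equivalent extensions have homeomorphic domains (Definition~1.1) and that the algorithm of~\hyperref[Preaux]{[8]} decides the $3$-ball recognition on the triangulation supplied by Definition~\ref{inscribed map}. Your write-up simply makes explicit the two inclusions $q(I_0(\zeta)) \subseteq E_0(f)$ and $E_0(f) \subseteq q(I_0(\zeta))$ and the well-definedness check that the paper leaves to the reader.
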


\subsection{Organization}\

We will give some basic definitions in Subsection \ref{subsection 2.1},
and we will introduce the branched immersion, good $2$-complexes, cancellation operation in Subsection \ref{subsection 2.2}, Subsection \ref{subsection 2.3}, Subsection \ref{subsection 2.4}.
In Section \ref{section 3},
we will define the $(M,G)$-simple $2$-complex in a compact $3$-manifold $M$ (with nonempty boundary) with a (trivalent) embedded graph $G \subseteq \partial M$,
and we will give the way to construct it.
In Section \ref{section 4},
we will define the inscribed set.
In Section \ref{section 5},
we will prove Theorem \ref{main theorem}.
	
\section{Preliminaries}\label{section 2}\

In this section,
we introduce some basic ingredients.

\subsection{The immersed surfaces in $\mathbb{R}^{3}$}\label{subsection 2.1}\

\begin{defn}\label{winding number}\rm
	(Winding number)
	Let $\Sigma$ be a closed oriented surface and $f: \Sigma \to \mathbb{R}^{3}$ an immersion.
	Chosen $x \in \mathbb{R}^{3} \setminus f(\Sigma)$,
	assume $u: \Sigma \to S^{2}$ is the map such that $u(t) = \dfrac{f(t) - x}{|f(t) - x|}$ ($\forall t \in \Sigma$).
	Let $\omega(f,x) = deg_{u}(x)$ (see \hyperref[GP]{[4, Page 144]}).
\end{defn}\rm

\begin{remark}\rm
	If $F: M \to \mathbb{R}^{3}$ is an immersion to extend $f$,
	then $\omega(f,x)$ is the number of preimages under $F$ at each $x \in \mathbb{R}^{3} \setminus f(\Sigma)$.
	In the rest of this paper,
	if $f: \Sigma \to \mathbb{R}^{3}$ is an immersion,
	we will always assume that 
	$\omega(f,x) \geqslant 0, \forall x \in \mathbb{R}^{3} \setminus f(\Sigma)$
	(if not,
	then there is no immersed $3$-manifold to extend $f$).
\end{remark}

\begin{defn}\rm\label{D_n(f)}
	Let $\Sigma$ be a closed oriented surface and $f: \Sigma \to \mathbb{R}^{3}$ an immersion.
	For each $1 \leqslant k \leqslant \max_{x \in \mathbb{R}^{3} \setminus f(\Sigma)} \omega(f,x)$,
	let $D_k(f) = \overline{\{x \in \mathbb{R}^{3} \setminus f(\Sigma) \mid
	\omega(f,x) \geqslant k\}}$.
    And we let $G_k(f) = \partial D_k(f) \cap \partial D_{k-1}(f)$ ($2 \leqslant k \leqslant \max_{x \in \mathbb{R}^{3} \setminus f(\Sigma)} \omega(f,x)$)
    and $G_1(f) = \emptyset$.
\end{defn}

\begin{figure}\label{xyz}
	\includegraphics[width=0.3\textwidth]{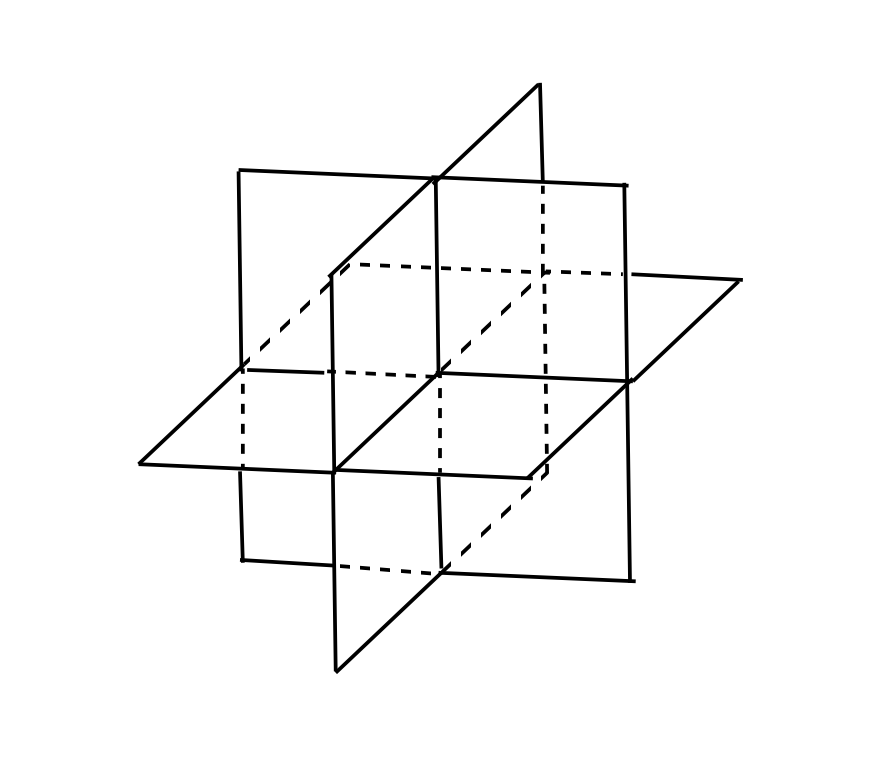}
	\includegraphics[width=0.7\textwidth]{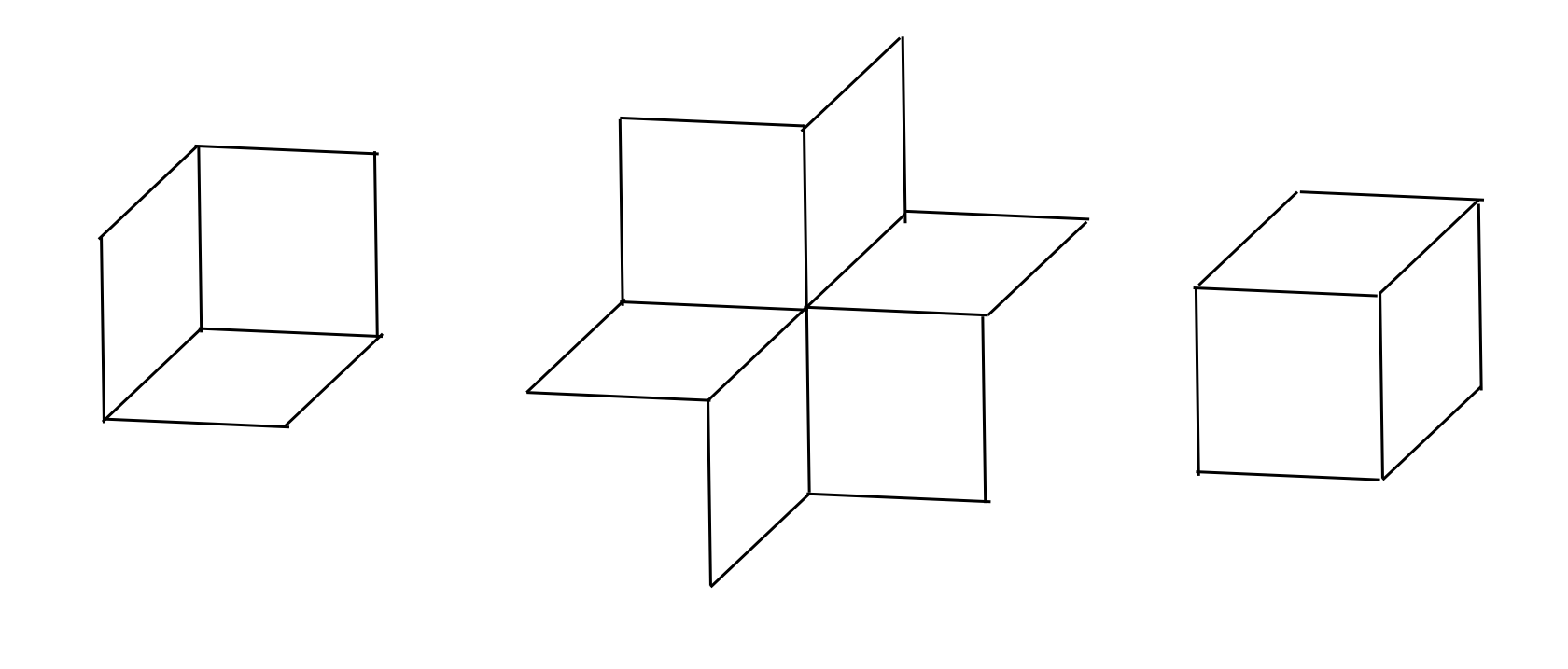}
	\caption{$\partial D_{k-1}(f),\partial D_k(f),\partial D_{k+1}(f)$ intersect at one point (a triple point).}
\end{figure}

Both Definition \ref{winding number} and Definition \ref{D_n(f)} can be generalized to the case of an immersion of a disconnected surface. We will apply this in Subsection \ref{subsection 2.4}.

In \hyperref[LV]{[6]},
if $f: \Sigma \to \mathbb{R}^{3}$ is a (transverse) immersion of a closed oriented surface $\Sigma$,
the points in $f(\Sigma)$ with $1,2,3$ preimages are called \emph{simple points}, \emph{double points}, \emph{triple points}.
The non-simple points, triple points of $f(\Sigma)$ are denoted by
$S(f(\Sigma))$, $T(f(\Sigma))$.

Obviously,
$G_k(f) \subseteq S(f(\Sigma)) \cap \partial D_{k}(f) = G_k(f) \cup G_{k+1}(f)$.
Actually,
$G_k(f)$ is an embedded graph such that all vertices have degree $2$ or $3$ (in this paper, we assume that all embedded graphs have no isolated point), 
and $\{v \in V(G_k(f)) \mid deg_{G_k(f)}(v) = 3\} = G_k(f) \cap T(f(\Sigma))$.
We will not emphasize this in the rest of this paper.

Figure \ref{xyz} shows how $\partial D_{k-1}(f),\partial D_k(f),\partial D_{k+1}(f)$ intersect at a tripe point.
To describe the relation between $G_k(f)$ and $G_{k+1}(f)$ in $\partial D_k(f)$ (see the third picture in Figure \ref{xyz}),
we give the following statement:

\begin{defn}\rm\label{thin trivalent graph}
	Let $\Sigma$ be a closed oriented surface and $G,G^{'} \subseteq \Sigma$ an embedded graphs such that all vertices have degree $2$ or $3$.
	$G^{'}$ is a \emph{thin trivalent graph} of $G$ in $\Sigma$ if:
	
	$\bullet$
	For each $x \in G^{'} \cap G$,
	$x \in \{v \mid v \in V(G), deg_G(v) = 3\}$
	and $x \in \{v \mid v \in V(G^{'}), deg_{G^{'}}(v) = 3\}$.
	Assume $a,b,c,d,e,f$ are the $6$ edges of $G$ and $G^{'}$ at $x$ clockwise,
	and $a \in E(G)$.
	Then $c,e \in E(G)$, $b,d,f \in E(G^{'})$.
\end{defn}

\subsection{Branched immersion}\label{subsection 2.2}\

A (compact) topological space is a \emph{polyhedron} if it is the underlying space of a simplicial complex.
In this paper,
we say a polyhedron $K$ is a \emph{branched $3$-manifold} if
there exists $M$ a compact oriented $3$-manifold and $S_1,\ldots,S_n$ some components of $\partial M$, 
$S_1, \ldots, S_n \ncong S^{2}$ (and we allow $\{S_1,\ldots,S_n\} = \emptyset$),
$K = M \cup_{i_1} C(S_1) \cup_{i_2} \ldots \cup_{i_n} C(S_n)$
(where $i_1,\ldots,i_n$ are the identity maps of $S_1,\ldots,S_n$,
and $C(S) = S \times I / S \times \{1\}$ for an arbitrary topological space $S$).
Moreover,
we denote $\partial M \setminus (S_1 \cup \ldots \cup S_n)$ by $\partial K$ and say it is \emph{boundary} of $K$.
And we denote $\{the$ $vertices$ $of$ $the$ $cones$ $C(S_1),\ldots,C(S_n)\}$ by $B(K)$
(i.e. $B(K) = \{the$ $points$ $in$ $K$ $that$ $have$ $no$ $open$ $neighborhood$ $homeomorphic$ $to$ $\mathbb{R}^{3}$ $or$ $\mathbb{R}^{3}_{+}\}$).

The following statement generalize the branched covers to the map of a branched $3$-manifold $K$ into $\mathbb{R}^{3}$ (we request $B(K)$ to lie in the singular set, then $K \setminus B(K)$ is still a noncompact $3$-manifold).

\begin{defn}\rm
	Let $K$ be a branched $3$-manifold and $g: K \to \mathbb{R}^{3}$ a PL continuous map.
	$g$ is called a \emph{branched immersion} if there exists $F \subseteq \mathbb{R}^{3}$ an embedded graph such that
	$g^{-1}(F)$ is an embedded graph,
	$B(K) \subseteq g^{-1}(F)$,
	and $g \mid_{K \setminus g^{-1}(F)}$ is an immersion. 
	The \emph{singular set} of $g$ is the set consisting of all $x \in K$ such that $g$ is not a locally homeomorphism at $x$,
	and the \emph{branch set} of $g$ is the image of singular set under $g$.
\end{defn}

\begin{remark}\rm
In this paper,
if $g: K \to \mathbb{R}^{3}$ is a branched immersion of a branched $3$-manifold $K$
and $S$, $B$ are the singular set, branch set of $g$,
we will always assume that $g$ maps $S$ homeomorphically to $B$.
For each branch point $y \in B$,
assume $\{x\} = g^{-1}(y) \cap S$,
we say $y$ has index $k$ if $g$ is $(k+1)$-to-one near $x$.
\end{remark}

\begin{remark}\rm
	We explain the difference between the branched covers and our definition (branched immersion):
	we do not request it to be proper;
	we allow $x \in K$ a singular point whose link with respect to $K$ is a not a $2$-sphere,
	then $g \mid_{lk(x,K)}$ is a branched cover of a surface to a $2$-sphere (the number of such points is finite in total).
	
	Actually,
	different from constructing $3$-manifolds from branched covers that branched over links,
	the maps that branched over embedded graphs may construct branched $3$-manifolds.
	That's why we define the branched immersions in branched $3$-manifolds.
	We will introduce the cancellation operation in Subsection \ref{subsection 2.4},
	which is defined in the branched immersions of branched $3$-manifolds.
	
	For a branched immersion $g: K \to \mathbb{R}^{3}$ (where $K$ is a branched $3$-manifold, and $K$ is not a $3$-manifold),
	$g$ can be transformed to a branched immersion of a $3$-manifold into $\mathbb{R}^{3}$ by deleting an open neighborhood at each $x \in B(K)$
	and filling a handlebody.
	But this branched immersion does not send the singular set homeomorphically to the branch set (also, this branched immersion is not an open map).
	So we do not do such transformation.
\end{remark} 

\begin{ep}\rm
	Assume $C(T^{2}) = T^{2} \times I / T^{2} \times \{1\}$ is a cone of a torus,
	and $B^{3} = S^{2} \times I / S^{2} \times \{1\}$ is a $3$-ball in $\mathbb{R}^{3}$.
	Let $p: T^{2} \to S^{2}$ be an arbitrary branched cover.
	Let $g:C(T^{2}) \to B^{3}$ ($B^{3} \subseteq \mathbb{R}^{3}$) be the map such that 
	$g(x,t) = (p(x),t)$ ($\forall x \in T^{2}, t \in [0,1)$),
	$g(T^{2},1) = (S^{2},1)$.
	Then $g$ is a branched immersion.
\end{ep}

\subsection{Good $2$-complexes}\label{subsection 2.3}\

\begin{defn}\rm \label{good complex}
	Let $M$ be a compact $3$-manifold with nonempty boundary
	and $G \subseteq \partial M$ an embedded graph such that all vertices have degree $2$ or $3$.
	Let $X \subseteq M$ be an embedded $2$-complex.
	We say $X$ is a \emph{good $2$-complex in $M$ with respect to $G$} if:
	
	$\bullet$
	Let $\dot{\varphi}^{2}_{X}: \coprod_\alpha \partial D_{\alpha}^{2} \to X^{1}$
	be the attaching map of all $2$-cells of $X$.
	Then $\dot{\varphi}^{2}_{X}$ is surjective. (i.e. all points in $X$ have local dimension $2$)
	
	$\bullet$
	For each $2$-cell $e_{\alpha}$ of $X$,
	the characteristic map $\varphi^{2}_{\alpha}: D_{\alpha}^{2} \to X$ is an embedding.
	
	$\bullet$
	$X \cap \partial M = G$, 
	and $G \setminus \{v \in V(G) \mid deg_G(v) = 3\}$ is the set consisting of
	all $t \in X$ such that $\exists N(t)$ an open neighborhood of $t$ in $M$,
	$(N(t) \cap X,t) \cong (\mathbb{R}^{2}_{+},0)$.
	
	$\bullet$
	For each $t \in \{v \in V(G) \mid deg_G(v) = 3\}$,
	$t$ has an open neighborhood $N(t)$ in $M$ such that $(N(t) \cap X,t)$
	is homeomorphic to 
	$(\{x = 0, y \geqslant 0, z \geqslant 0\} \cup \{y = 0, z \geqslant 0\},0)$ (see Figure \ref{points in simple complex} (a), where $\{x = 0, y \geqslant 0, z \geqslant 0\} \cup \{y = 0, z \geqslant 0\}$ denotes a subset of $\mathbb{R}^{3}$, $(x,y,z)$ is the coordinates of $\mathbb{R}^{3}$).
\end{defn}

Since the set of non-simple points in an immersed surface $f(\Sigma)$ ($f: \Sigma \to \mathbb{R}^{3}$ is an immersion of a surface $\Sigma$) is denoted by $S(f(\Sigma))$,
we generalize this notation to an arbitrary $2$-complex:

\begin{defn}\rm
	For an arbitrary $2$-complex $X$,
	we denote by $S(X)$ the set consisting of all points in $X$ that have no open neighborhood in $X$ homeomorphic to $\mathbb{R}^{2}$ or $\mathbb{R}^{2}_{+}$.
\end{defn}

\subsection{Cancellation operation}\label{subsection 2.4}\

\hyperref[Zhao]{[9]}
states the cancellation operation for a polymersion of a surface (with nonempty boundary) into a surface.
In this subsection,
we define the cancellation operation for a branched immersion of a branched $3$-manifold (with nonempty boundary) into $\mathbb{R}^{3}$.

Recall that Definition \ref{winding number} and Definition \ref{D_n(f)} can be generalized to the case of an immersion of a disconnected surface.
Assume $K$ is a branched $3$-manifold with nonempty boundary ($K$ may be disconnected),
and $g: K \to \mathbb{R}^{3}$ is a branched immersion.
Assume $n = \max_{x \in \mathbb{R}^{3} \setminus g(\partial K)} \omega(g \mid_{\partial K},x)$.
We denote $D_n(g \mid_{\partial K}), G_n(g \mid_{\partial K})$
by $R(g), G(g)$.

\begin{defn}[Cancellable domains]\label{cancellable components}\rm
	Let $K$ be a branched $3$-manifold with nonempty boundary ($K$ may be disconnected)
	and $g: K \to \mathbb{R}^{3}$ a branched immersion.
	
	(i)
	Assume $A_1, A_2, \ldots, A_n \subseteq K$ are closed domains (in this paper, the ``domains'' in the space are compact connected co-dimension $0$ submanifolds).
	$A_1, A_2, \ldots, A_n$ are called \emph{cancellable} if:
	
	$\bullet$
	$Int(A_1),$ $Int(A_2), \ldots, Int(A_n)$ are homeomorphically embedded into $R(g)$ by $g$.
	
	$\bullet$
	There exists $X$ a good $2$-complex in $R(g)$ with respect to $G(g)$
	such that $\{g(Int(A_1)),$ $g(Int(A_2)),$ $\ldots, g(Int(A_n))\}
	= \{the$ $components$ $of$ $Int(R(g)) \setminus X\}$
	(i.e.  $g$ maps $A_1, A_2, \ldots, A_n$ homemomorphically to the closed components 
	obtained by cutting off $X$ from $R(g)$).
	We call $X$ the $2$-complex \emph{associated} to $A_1, A_2, \ldots, A_n$.
	
	$\bullet$
	$(g \mid_{A_i})^{-1}(g(A_i)\cap \partial R(g)) \subseteq \partial K$ if $g(A_i) \cap \partial R(g) \ne \emptyset$
	($\forall i \in \{1,2,\ldots,n\}$).
	
	(ii)
	 We denote $g(\overline{\partial(A_1 \cup A_2 \cup \ldots \cup A_n) \setminus \partial K})$ by $X(A_1,A_2,\ldots,A_n)$.
\end{defn}

Obviously,
$X(A_1,A_2,\ldots,A_n)$ is a subcomplex of $X$,
and it is a good $2$-complex in $R(g)$ with respect to $G(g)$.

\begin{remark}\label{determine cancellable domains}\rm
The cancellable domains $A_1,\ldots,A_n$ can be determined uniquely in following $2$ cases:

(a) 
If each component of $R(g) \setminus X$ contains a component of $\partial R(g) \setminus G(g)$,
then $A_1,\ldots,A_n$ are determined uniquely by $X$

(b)
Fix the associated $2$-complex $X$.
Given a set $P \subseteq K$ 
such that $A_i \cap (\partial K \cup P) \ne \emptyset$ ($\forall i \in \{1,2,\ldots,n\}$),
then $A_1,\ldots,A_n$ are determined uniquely.
\end{remark}

\begin{defn}[Cancellation operation]\label{cancellation operation}\rm
	Let $K$ be a branched $3$-manifold with nonempty boundary ($K$ may be disconnected)
	and $g: K \to \mathbb{R}^{3}$ a branched immersion.
	Assume that the closed domains $A_1, \ldots, A_n \subseteq K$ are cancellable,
	and $X$ is the $2$-complex associated to $A_1,\ldots,A_n$.
	
	(i)
	A \emph{cancellation of $\{A_1,\ldots,A_n\}$} (\emph{canceling $\{A_1,\ldots,A_n\}$})
	$(g,K) \stackrel{\{A_1,\ldots,A_n\}}{\leadsto} (g_1,K_1)$
	is the following procedure:
	
	$\bullet$
	Let $K_0$ be the space obtained by cutting out $A_1,\ldots,A_n$ from $K$
	(i.e. assume $\{P_1,\ldots,P_k\} = \{the$ $components$ $of$ $K \setminus (A_1 \cup A_2 \cup \ldots \cup A_n)\}$,
	and let $K_0 = \coprod_{i=1}^{k} \overline{P_i}$).
	Let $g_0: K_0 \to \mathbb{R}^{3}$ be the map induced by $g$.
	For each $\alpha$ a $2$-cell of $X(A_1,A_2,\ldots,A_n)$,
	there exists exactly two components of $g_{0}^{-1}(\overline{\alpha})$ lying in the boundary of $K_0$.
	We denote them by $D_{\alpha}^{+},D_{\alpha}^{-}$.
	Let $h$ be the equivalence relation such that 
	$x \stackrel{h}{\sim} y$ if there exists $\alpha$ a $2$-cell of $X(A_1,A_2,\ldots,A_n)$,
	$x \in D_{\alpha}^{+}$, 
	$y \in D_{\alpha}^{-}$,
	and $g_0(x) = g_0(y)$.
	Let $K_1$ be the identification space $K_0 / \sim_h$.
	Assume $h_*: K_0 \to K_1$ is the identification map induced by $h$.
	Let $g_1: K_1 \to \mathbb{R}^{3}$ be the map given by following commutative diagram.
	\begin{center}
		$\xymatrix{
			& K_0 \ar[d]^{h_*} \ar[r]_{g_0}
			& \mathbb{R}^{3}  \ar[d]_{id}       \\
			& K_1 \ar[r]^{g_1}   & \mathbb{R}^{3}                }$
	\end{center}

    Hence the cancellation operation $(g,K) \stackrel{\{A_1,\ldots,A_n\}}{\leadsto} (g_1,K_1)$ has been defined
    ($K_1$ is a branched $3$-manifold and
    $g_1: K_1 \to \mathbb{R}^{3}$ is a branched immersion).
    
	(ii)
	For each $x \in X(A_1,A_2,\ldots,A_n)$,
	we say the cancellation $(g,K) \stackrel{\{A_1,\ldots,A_n\}}{\leadsto} (g_1,K_1)$
	is \emph{regular at} $x$ if $\#(h_*(\partial K_0 \cap g^{-1}(x))) = 1$ (where $h_*: K_0 \to K_1$ is the identification map of cancellation).
	The cancellation $(g,K) \stackrel{\{A_1,\ldots,A_n\}}{\leadsto} (g_1,K_1)$ is called \emph{regular} if it is regular at every $x \in X(A_1,A_2,\ldots,A_n)$.
	
	(iii)
	Assume that the cancellation $(g,K) \stackrel{\{A_1,\ldots,A_n\}}{\leadsto} (g_1,K_1)$ is regular.
	Let $T: X(A_1,$ $\ldots,A_n) \to K_1$ be the map
	sending each $x \in X(A_1,A_2,\ldots,A_n)$ to $h_*(\partial K_0 \cap g^{-1}(x))$
	(then $X(A_1,A_2,\ldots,A_n)$ is homeomorphically embedded into $K_1$ by $T$).
	We call $T$ the \emph{associated map} of the cancellation of $\{A_1,\ldots,A_n\}$.
\end{defn}

\begin{remark}\label{regular means}\rm
    We give some remarks on cancellations.
    For $(g,K) \stackrel{\{A_1,\ldots,A_n\}}{\leadsto} (g_1,K_1)$ the cancellation of $A_1,\ldots,A_n$, the following hold:
    
    (a)
    $g_1(\partial K_1) = \overline{g(\partial K) \setminus \partial R(g)} = \bigcup_{i=1}^{n-1}D_i(g \mid_{\partial K})$ 
    (where $n = \max_{x \in \mathbb{R}^{3} \setminus g(\partial K)} \omega(g \mid_{\partial K},x)$).
    
    (b)
    $G(g) = X(A_1,\ldots,A_n) \cap \partial R(g_1)$.
    $X(A_1,\ldots,A_n)$ is a good $2$-complex in $R(g_1)$ with respect to $G(g)$.
    
    (c)
    The cancellation $(g,K) \stackrel{\{A_1,\ldots,A_n\}}{\leadsto} (g_1,K_1)$ is regular at every $x \in X(A_1,A_2,\ldots,A_n) \setminus S(X(A_1,A_2,\ldots,A_n))$.
    So $(g,K) \stackrel{\{A_1,\ldots,A_n\}}{\leadsto} (g_1,K_1)$ is regular if and only if it is regular at every $x \in S(X(A_1,A_2,\ldots,A_n))$.
	
	(d)
	If the cancellation $(g,K) \stackrel{\{A_1,\ldots,A_n\}}{\leadsto} (g_1,K_1)$ is regular,
	then $T(G(g)) \subseteq \partial K_1$ (where $T$ is the associated map of the cancellation).
\end{remark}

\section{Embedded $2$-complexes in $3$-manifolds}\label{section 3}\

In this section,
we introduce some embedded $2$-complexes in $3$-manifolds,
and give the steps to construct them.
We will let them be the associated $2$-complexes of cancellable domains 
to yield cancellable domains in Section \ref{section 5}.

\subsection{$(M,G)$-simple $2$-complex}

\begin{figure}\label{points in simple complex}
	\includegraphics[width=0.29\textwidth]{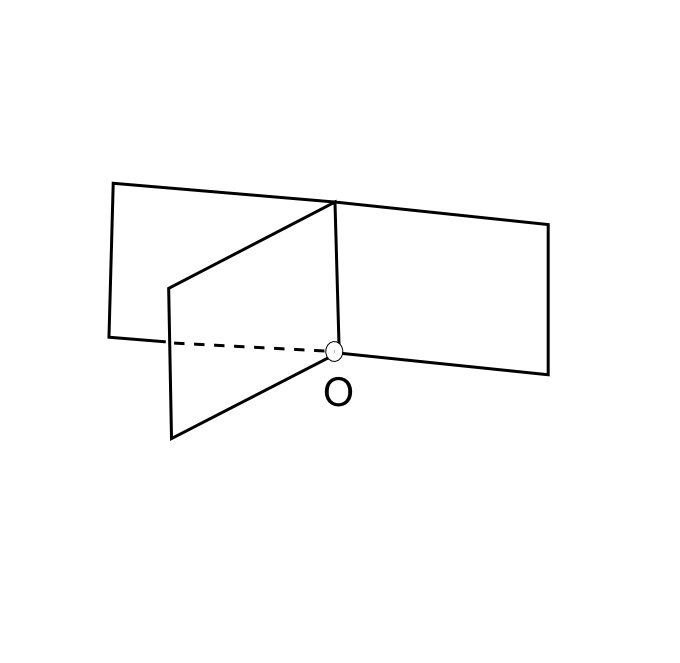}
	(a)
	\includegraphics[width=0.29\textwidth]{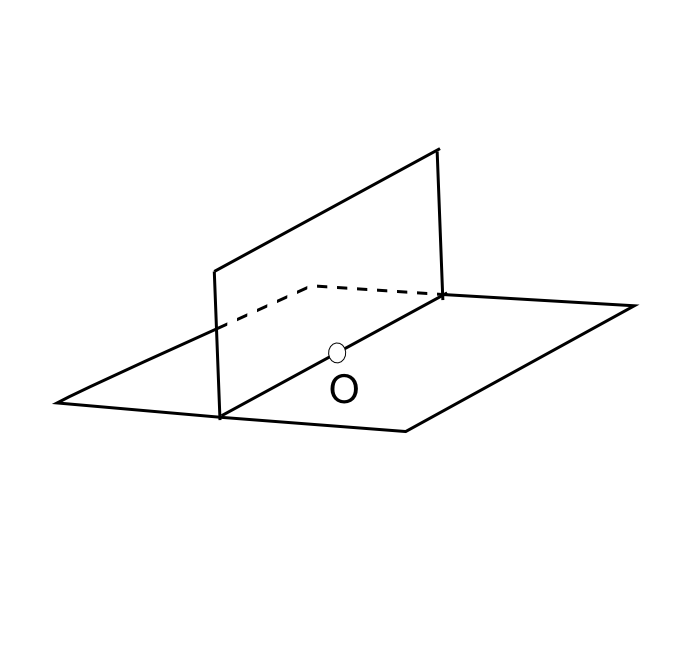}
	(b)
	\includegraphics[width=0.29\textwidth]{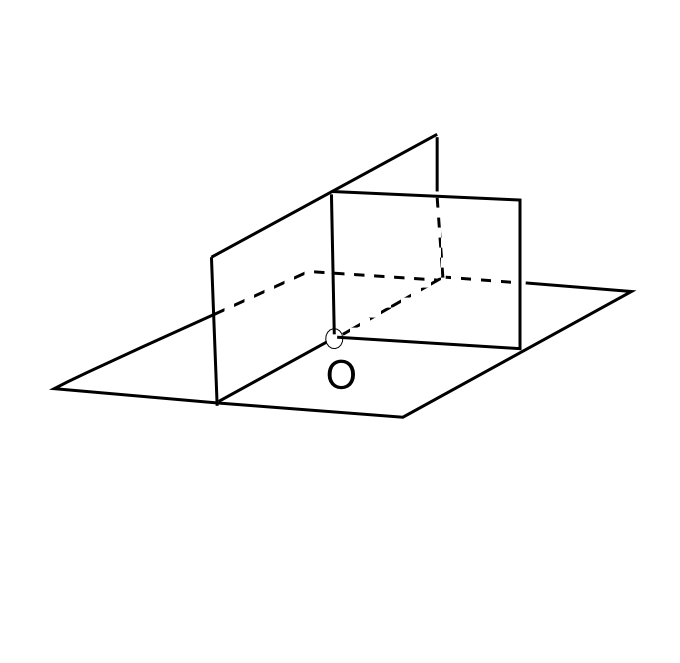}
	(c)
	\caption{The points in a $(M,G)$-simple $2$-complex.}
\end{figure}

\begin{defn}\label{relative}\rm
	Let $M$ be a compact $3$-manifold with nonempty boundary and 
	$G \subseteq \partial M$ an embedded graph such that all vertices have degree $2$ or $3$.
	Let $X \subseteq M$ be $2$-complex in $M$.
	
	(i)
	We say $X$ is a \emph{$(M,G)$-simple $2$-complex} if:
	
	$\bullet$
	$X \cap \partial M = G$.
	
	$\bullet$
	For each $t \in G \setminus \{v \in V(G) \mid deg_G(v) = 3\}$,
	there exists an open neighborhood $N(t)$ of $t$ in $M$ such that $(N(t) \cap X,t)$ 
	is homeomorphic to $(\mathbb{R}^{2}_{+},0)$.
	
	$\bullet$
	For each $t \in \{v \in V(G) \mid deg_G(v) = 3\}$,
	there exists an open neighborhood $N(t)$ of $t$ in $M$ such that $(N(t) \cap X,t)$
	is homeomorphic to 
	$(\{x = 0, y \geqslant 0, z \geqslant 0\} \cup \{y = 0, z \geqslant 0\},0)$ (see Figure \ref{points in simple complex} (a)).
	(where $\{x = 0, y \geqslant 0, z \geqslant 0\} \cup \{y = 0, z \geqslant 0\}$ denotes a subset of $\mathbb{R}^{3}$, $(x,y,z)$ is the coordinates of $\mathbb{R}^{3}$, and the following is same)
	
	$\bullet$
	For each $t \in X \setminus G$,
	there exists an open neighborhood $N(t)$ of $t$ in $M$ such that $(N(t) \cap X,t)$ is homeomorphic to one of (a) $\sim$ (c):
	
	(a)
	$(\mathbb{R}^{2},0)$.
	
	(b)
	$(\{z = 0\} \cup \{x = 0, z \geqslant 0\},0)$ (see Figure \ref{points in simple complex} (b)).
	
	(c)
	$(\{z = 0\} \cup \{x = 0, z \geqslant 0\} \cup \{y = 0, x \geqslant 0, z \geqslant 0\},0)$ (see Figure \ref{points in simple complex} (c)).
	
	$\bullet$
	$\#(\{the$ $components$ $of$ $\partial M \setminus G\}) = \#(\{the$ $components$ $of$ $M \setminus X\})$,
	and each component of $M \setminus X$ contains exactly one component of $\partial M \setminus G$.
	
	$\bullet$
	Assume $\{A_1,A_2,\ldots,A_n\} = \{the$ $components$ $of$ $\partial M \setminus G\}$,
	$\{B_1,B_2,\ldots,B_n\} = \{the$ $components$ $of$ $M \setminus X\}$,
	and $A_k \subseteq B_k$
	($\forall k \in \{1,2,\ldots,n\}$).
	Choose $x_k \in A_k$,
	assume $i_*: \pi_1(A_k,x_k) \to \pi_1(M,x_k),
	j_*: \pi_1(B_k,x_k) \to \pi_1(M,x_k)$
	are the maps induced by
	the inclusion maps of $A_k,B_k$ into $M$.
	Then $i_*(\pi_1(A_k,x_k)) = j_*(\pi_1(B_k,x_k))$ ($\forall k \in \{1,2,\ldots,n\}$).
	
	(ii)
	If $X$ is a $(M,G)$-simple $2$-complex,
	we say $Y$ is a \emph{good subcomplex} of $X$ if:
	$Y$ is a subcomplex of $X$ such that 
	$Y$ is a good complex in $M$ with respect to $G$.
	We denote by $sub(X)$ the set consisting of all good subcomplex of $X$.
\end{defn}

Obviously,
a $(M,G)$-simple $2$-complex is a good $2$-complex in $M$ with respect to $G$.
Moreover,
given a covering space $p: (\tilde{M},\tilde{x}_k) \to (M,x_k)$ ($x_k \in A_k$),
then the inclusion map $i_B: (B_k,x_k) \to (M,x_k)$ has a lift $\tilde{i}_B: (B_k,x_k) \to (\tilde{M},\tilde{x}_k)$ if and only if the inclusion map $i_A: (A_k,x_k) \to (M,x_k)$ has a lift $\tilde{i}_A: (A_k,x_k) \to (\tilde{M},\tilde{x}_k)$.

\begin{defn}\label{appropriate}\rm
	Let $M$ be a compact $3$-manifold with nonempty boundary and 
	$G \subseteq \partial M$ an embedded graph such that all vertices have degree $2$ or $3$.
	$(M,G)$ is called \emph{appropriate} if:
	for each $e \in E(G)$ such that both $2$ sides of $e$ lie in the same component $A$ of $\partial M \setminus G$ (i.e. $Int(e) \subseteq Int(\overline{A})$),
	all immersed loops in $\overline{A}$ that intersect with $e$ one time transversely are not null-homotopic in $M$.
\end{defn}

\begin{figure}\label{not appropriate}
\begin{center}
	\includegraphics[width=0.5\textwidth]{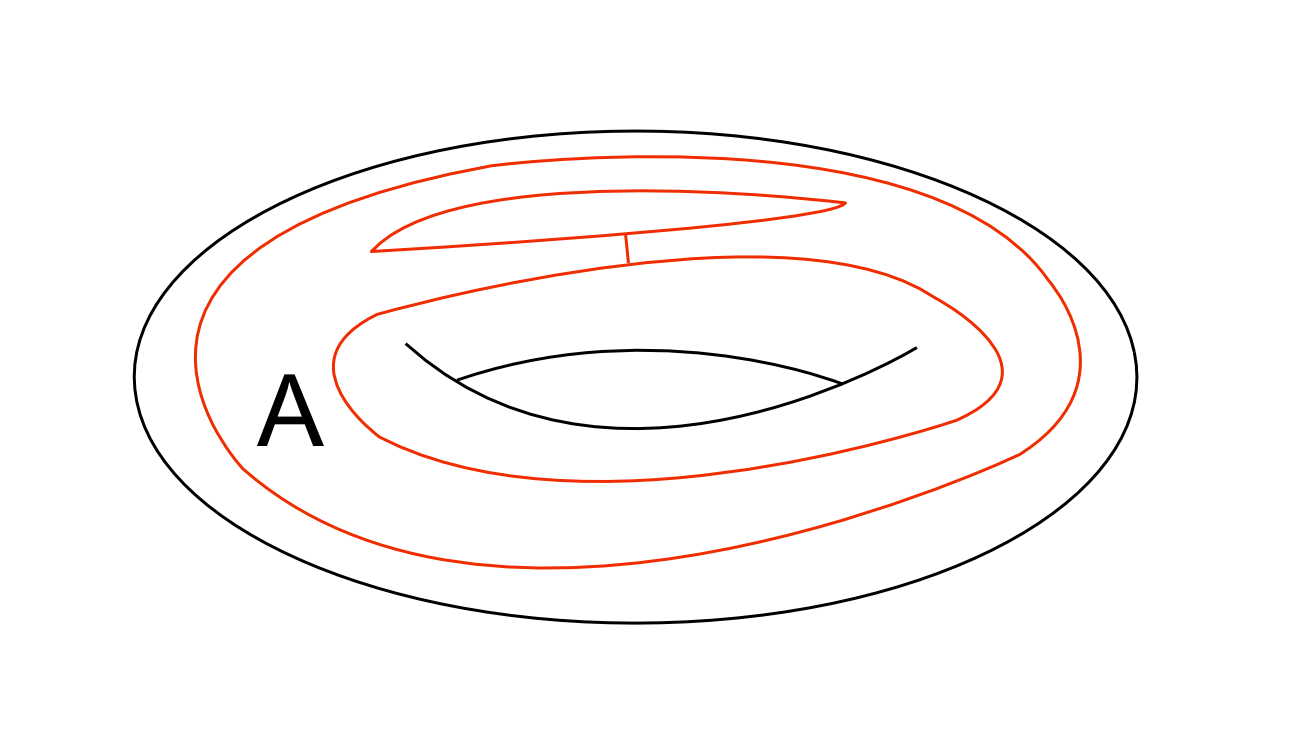}
\end{center}
	\caption{$M = S^{1} \times D^{2}$, $A$ is a component of $\partial M \setminus G$. Then $(M,G)$ is not appropriate.}
\end{figure}

Figure \ref{not appropriate} gives an example of $(M,G)$ to be not appropriate.
And
we can state Definition \ref{appropriate} in a different way.
$(M,G)$ is appropriate if:
for each $e \in E(G)$ such that both $2$ sides of $e$ lie in the same component $A$ of $\partial M \setminus G$.
Choose $x \in A$,
assume $i_*: \pi_1(A,x) \to \pi_1(M,x),i_{*}^{'}: \pi_1(A \cup Int(e),x) \to \pi_1(M,x)$
are the maps induced by the inclusion maps of $A,A \cup Int(e)$ into $M$,
then $i_*(\pi_1(A,x)) \ne i^{'}_{*}(\pi_1(A \cup Int(e),x))$.

\begin{lm}\label{appropriate = lifted}
	Let $M$ be a compact $3$-manifold with nonempty boundary and 
	$G \subseteq \partial M$ an embedded graph such that all vertices have degree $2$ or $3$.
	$(M,G)$ is appropriate if and only if:
	for each component $A$ of $\partial M \setminus G$ and $x \in A$,
	there exists a covering space $p: (\tilde{M},\tilde{x}) \to (M,x)$ and $A_0 \subseteq \tilde{M}$ a closed domain
	such that $\tilde{x} \in A_0$,
	and $p \mid_{Int(A_0)}$ is a homeomorphism between $Int(A_0)$ and $A$.
\end{lm}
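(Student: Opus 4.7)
The plan is to treat the two implications separately but through a single construction: for each component $A$ of $\partial M\setminus G$ with chosen basepoint $x\in A$, the covering space $p:(\tilde{M},\tilde{x})\to(M,x)$ corresponding to the subgroup $H=i_{*}(\pi_{1}(A,x))\leq\pi_{1}(M,x)$. I will freely use the reformulation of appropriateness stated just before the lemma, namely $i_{*}(\pi_{1}(A,x))\neq i'_{*}(\pi_{1}(A\cup \mathrm{Int}(e),x))$ for every $e\in E(G)$ with both sides in $A$, which by a van Kampen computation is equivalent to $[\gamma]\notin i_{*}(\pi_{1}(A,x))$ for every loop $\gamma$ in $\overline{A}$ crossing $e$ once transversely.

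For $(\Rightarrow)$, suppose $(M,G)$ is appropriate. The lifting criterion produces a lift $\tilde{i}_{A}:(A,x)\to(\tilde{M},\tilde{x})$ of the inclusion; since $p\circ\tilde{i}_{A}=i_{A}$ is injective, $\tilde{i}_{A}$ is injective, and since $p$ is a local homeomorphism, $\tilde{i}_{A}$ is an open embedding into $\partial\tilde{M}$. I would set $A_{0}:=\overline{\tilde{i}_{A}(A)}\subseteq\partial\tilde{M}$. To obtain compactness, first cut $\overline{A}$ along every edge of $G$ whose two sides lie in $A$, producing a compact surface with boundary $\hat{A}$ that deformation-retracts onto its interior $A$; the composition $j:\hat{A}\to\overline{A}\hookrightarrow M$ then satisfies $j_{*}(\pi_{1}(\hat{A}))=H$, so $j$ lifts to $\tilde{j}:\hat{A}\to\tilde{M}$, and $A_{0}=\tilde{j}(\hat{A})$ is compact. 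The core content is to verify that $A_{0}$ is a 2-manifold with boundary and $\mathrm{Int}(A_{0})=\tilde{i}_{A}(A)$: for $y\in A_{0}\setminus\tilde{i}_{A}(A)$ lying over $t\in\mathrm{Int}(e)$ with $e$ having both sides in $A$, an immersed loop $\gamma\subseteq\overline{A}$ crossing $e$ once transversely at $t$ would have its lift return to $\tilde{x}$ exactly when both local sides of the lift of $e$ at some lift of $t$ belong to $\tilde{i}_{A}(A)$; by appropriateness $[\gamma]\notin H$, so this cannot happen, and $y$ is forced to be a boundary point of $A_{0}$.

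For $(\Leftarrow)$, assume the covering property, and let $e\in E(G)$ have both sides in $A$ and $\gamma\subseteq\overline{A}$ be an immersed loop based at $x$ crossing $e$ once transversely. Take the covering $p$, basepoint $\tilde{x}$, and closed domain $A_{0}$ supplied by the hypothesis (note $\tilde{x}\in \mathrm{Int}(A_{0})$ automatically, since $p\mid_{\mathrm{Int}(A_{0})}$ is a homeomorphism onto $A$), and put $H=p_{*}(\pi_{1}(\tilde{M},\tilde{x}))$, which contains $1$. Parametrize $\gamma$ so its crossing with $e$ occurs at parameter $1/2$, and lift to $\tilde{\gamma}$ with $\tilde{\gamma}(0)=\tilde{x}$. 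Then $\tilde{\gamma}$ stays in $\mathrm{Int}(A_{0})$ until reaching $\tilde{t}\in\partial A_{0}$ over $t$; because $A_{0}$ is a 2-manifold with boundary at $\tilde{t}$, exactly one local side of $\tilde{t}$ in $\partial\tilde{M}$ lies in $\mathrm{Int}(A_{0})$, so $\tilde{\gamma}$ exits $\mathrm{Int}(A_{0})$ at $\tilde{t}$ into a different component of $p^{-1}(A)\subseteq\partial\tilde{M}$. Since $\gamma$ does not meet $G$ again, $\tilde{\gamma}$ remains in that other component through parameter $1$, giving $\tilde{\gamma}(1)\neq\tilde{x}$ and thus $[\gamma]\notin H$; since $1\in H$, this yields $[\gamma]\neq 1$ as required.

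The principal obstacle will lie in the forward direction, in verifying that $A_{0}$ inherits a clean 2-manifold-with-boundary structure at points lying over trivalent vertices of $G$. There one must apply appropriateness simultaneously to each edge incident to a given vertex lift $\tilde{v}$, ruling out the possibility that two adjacent corners around $\tilde{v}$ both lie in $\tilde{i}_{A}(A)$ (which would require the cross-edge loop to lie in $H$, against appropriateness). Everything else is routine bookkeeping with the lifting criterion and with the fact that an injective local homeomorphism is an open embedding.
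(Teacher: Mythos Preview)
Your proposal is correct and follows essentially the same approach as the paper: for the forward direction, take the covering corresponding to $H=i_{*}(\pi_{1}(A,x))$, lift the inclusion of $A$, and let $A_{0}$ be the closure of the image; for the converse, show that a loop $\gamma$ in $\overline{A}$ crossing an edge $e$ once cannot lift as a loop because the two sides of $e$ lift to distinct components in $A_{0}$. Your write-up is in fact more careful than the paper's, which does not discuss compactness of $A_{0}$ or the local picture at trivalent vertices at all; the paper simply asserts that $p^{-1}(e)\cap A_{0}$ has two components and concludes $\mathrm{Int}(A_{0})=\tilde{i}(A)$.
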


\begin{proof}
	(i)
	We assume that for each $A$ a component of $\partial M \setminus G$ and $x \in A$,
	there exists
	$p: (\tilde{M},\tilde{x}) \to (M,x)$ a covering space and $A_0 \subseteq \tilde{M}$ a closed domain
	such that $\tilde{x} \in A_0$,
	and $p \mid_{Int(A_0)}$ is a homeomorphism between $Int(A_0)$ and $A$.
	We will prove that $(M,G)$ is appropriate.
	
	The inclusion map $i: (A,x) \to (M,x)$ has a lift $\tilde{i}: (A,x) \to (\tilde{M},\tilde{x})$ such that $\tilde{i}(A) = Int(A_0)$.
	But $p^{-1}(e) \cap A_0$ has $2$ different components for every $e \in E(G)$ such that both $2$ sides of $e$ lie in $A$.
	Hence $i_*(\pi_1(A,x)) \subseteq p_*(\pi_1(\tilde{M},\tilde{x}))$,
	and $i^{'}_{*}(\pi_1(A \cup Int(e),x)) \nsubseteq p_*(\pi_1(\tilde{M},\tilde{x}))$
	(where $p_*: \pi_1(\tilde{M},\tilde{x}) \to \pi_1(M,x),
	i_*: \pi_1(A,x) \to \pi_1(M,x),
	i_{*}^{'}: \pi_1(A \cup Int(e),x) \to \pi_1(M,x)$
	are the maps induced by $p$, $i$, and the inclusion map of $A \cup Int(e)$ into $M$).
	So $i_*(\pi_1(A,x)) \ne i^{'}_{*}(\pi_1(A \cup Int(e),x))$.
	
	Hence $(M,G)$ is appropriate.
	
	(ii)
	Assume $(M,G)$ is appropriate.
	For each component $A$ of $\partial M \setminus G$ and $x \in A$,
	let $p: (\tilde{M},\tilde{x}) \to (M,x)$ be the covering space such that 
	$p_*(\pi_1(\tilde{M},\tilde{x})) = i_*(\pi_1(A,x))$ ($p_*,i_*$ are same as (i)).
	
	Let $\tilde{i}: (A,x) \to (\tilde{M},\tilde{x})$ be a lift of the inclusion map $i: (A,x) \to (M,x)$.
	Let $A_0$ be the closure of $\tilde{i}(A)$.
	For each edge $e \in E(G)$ such that both $2$ sides of $e$ lie in $A$,
	$i^{'}_{*}(\pi_1(A \cup Int(e),x)) \nsubseteq p_*(\pi_1(\tilde{M},\tilde{x}))$ (where $i_{*}^{'}: \pi_1(A \cup Int(e),x) \to \pi_1(M,x)$ is the map induced by the inclusion map of $A \cup Int(e)$ into $M$).
	Then $p^{-1}(e) \cap A_0$ has $2$ different components.
	Hence $Int(A_0) = \tilde{i}(A)$.
\end{proof}

\begin{lm}\label{case 1 appropriate}
	(i) If $M$ is a compact $3$-manifold with nonempty boundary and
	$g: M \to \mathbb{R}^{3}$ is an immersion,
	then $(R(g),G(g))$ is appropriate.
	
	(ii) If $X$ is a $(R(g),G(g))$-simple $2$-complex,
	then there exists $A_1,\ldots,A_n$ cancellable domains such that $X$ is their associated $2$-complex.
	And the cancellation of $\{A_1,\ldots,A_n\}$ is regular.
\end{lm}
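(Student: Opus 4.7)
For part (i), the plan is to invoke Lemma \ref{appropriate = lifted} and, for each component $A$ of $\partial R(g) \setminus G(g)$, exhibit a covering of $R(g)$ in which $A$ lifts to the boundary. The covering will be extracted from $g$ itself. Since $\partial R(g) \subseteq g(\partial M)$ and $R(g)$ is the closure of the $\omega = n$ region, each point of $A$ has a distinguished preimage in $\partial M$, namely the sheet whose inward normal points into the $\omega = n$ region; gluing these preimages over $A$ yields a surface $\tilde A \subseteq \partial M$ with $g|_{\tilde A}$ a homeomorphism onto $A$. Let $M_0$ be the connected component of $g^{-1}(R(g)) \subseteq M$ containing $\tilde A$. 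I would argue that $g|_{M_0} \colon M_0 \to R(g)$ is proper (by compactness of $M$) and a local homeomorphism at every point (because $g$ is an immersion, the winding number is maximal on $\mathrm{Int}(R(g))$, and $g$ sends $\partial M_0$ into $\partial R(g)$), hence a finite covering of $R(g)$. Applying Lemma \ref{appropriate = lifted} to this covering and to the lift $\tilde A$ gives the desired appropriateness.

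For part (ii), the plan is two-fold. First, I would construct the cancellable domains. Index the components of $R(g) \setminus X$ as $B_1,\dots,B_n$. By Definition \ref{relative}, each $B_k$ contains a unique component $A_k^{\partial}$ of $\partial R(g) \setminus G(g)$ and $i_\ast(\pi_1(A_k^{\partial})) = j_\ast(\pi_1(B_k))$ inside $\pi_1(R(g))$. Part (i) supplies a covering $p_k \colon M_{0,k} \to R(g)$ in which $A_k^{\partial}$ lifts homeomorphically; the fundamental-group equality together with the standard lifting criterion then promotes this to a homeomorphic lift of all of $B_k$ inside $M_{0,k} \subseteq M$. Let $A_k \subseteq M$ be the closure of this lift. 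By construction, $g|_{\mathrm{Int}(A_k)}$ is a homeomorphism onto $B_k$, the sets $g(\mathrm{Int}(A_k))$ exhaust the components of $\mathrm{Int}(R(g)) \setminus X$, and the boundary condition $(g|_{A_k})^{-1}(g(A_k) \cap \partial R(g)) \subseteq \partial M$ holds because the lift of $A_k^{\partial}$ sits in $\partial M$. Hence $A_1,\dots,A_n$ are cancellable with associated $2$-complex $X$.

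Second, I would verify regularity of the cancellation. By Remark \ref{regular means}(c), it suffices to show regularity at every $x \in S(X)$. At such $x$, the $(R(g),G(g))$-simple structure of $X$ forces the local picture of $X$ near $x$ to be one of the three models in Figure \ref{points in simple complex}. In each model I would enumerate $\partial K_0 \cap g^{-1}(x)$, together with the pairing relation $h$ induced by the $2$-cells of $X$ meeting at $x$, and check directly that the quotient $h_\ast$ collapses all of them to a single point of $K_1$.

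The main obstacle I expect is this final regularity check. The lifts themselves are essentially forced by part (i), so constructing $A_1,\dots,A_n$ is routine once the covering argument of (i) is in place; what requires care is matching the three local models (a)--(c) of $(R(g),G(g))$-simple points against the possible local preimage structures of $g$ (double-point arcs and triple points of $g(\partial M)$ lying over $X \cap \partial R(g)$, plus any extra sheets in $g^{-1}(x) \setminus \partial K_0$) and verifying the single-point collapse in each combination. Once that case-analysis is done cleanly, the two parts assemble to the full lemma.
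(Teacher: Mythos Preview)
Your proposal is correct and matches the paper's approach: for (i), use (a component of) $g^{-1}(R(g))$ as the covering space in Lemma~\ref{appropriate = lifted}, with $\tilde A = (g|_{\partial M})^{-1}(A)$ as the lifted region; for (ii), lift each component $B_k$ of $R(g)\setminus X$ into $M$ via the lifting criterion (using the $\pi_1$-equality in Definition~\ref{relative}) to obtain the cancellable domains, and then verify regularity from the three local models of $S(X)$ in Figure~\ref{points in simple complex}. The paper's write-up is considerably terser---it leaves the covering verification and the regularity case-check implicit---but the route is the same.
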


\begin{proof}
	(i)
	For each component $A$ of $\partial R(g) \setminus G(g)$,
	let $S$ be $\overline{(g \mid_{\partial M})^{-1}(A)}$.
	Then $g \mid_{Int(S)}$ is a homeomorphism between $Int(S)$ and $A$.
	So $(R(g),G(g))$ is appropriate (by Lemma \ref{appropriate = lifted}).
	
	(ii)
	For each component $A$ of $\partial R(g) \setminus G(g)$,
	assume $B$ is the component of $R(g) \setminus X$ containing $A$,
	and $x \in A$.
	Then $i_{A*}(\pi_1(A,x)) = i_{B*}(\pi_1(B,x))$,
	where $i_{A*}: \pi_1(A,x) \to \pi_1(R(g),x), i_{B*}: \pi_1(B,x) \to \pi_1(R(g),x)$ are the maps induced by the inclusion maps of $A,B$ into $R(g)$.
	So there exists $\tilde{i}_{B}: (B,x) \to (M,g^{-1}(x) \cap \partial M)$ a lift of $i_B: (B,x) \to (R(g),x)$ (the inclusion map of $B$ into $R(g)$),
	and $\tilde{i}_B(B)$ contains $g^{-1}(A) \cap \partial M$.
	So there exist $A_1,\ldots,A_n$ cancellable domains such that $X$ is their associated $2$-complex.
	And we can verify that the cancellation of $\{A_1,\ldots,A_n\}$ is regular,
	since every point $t \in S(X)$ has a neighborhood $N(t)$ such that $(N(t) \cap X,t)$ is homeomorphic to one of Figure \ref{points in simple complex} (a), (b), (c).
\end{proof}

\begin{defn}\rm
	Let $M$ be a compact $3$-manifold with nonempty boundary and 
	$G_0 \subseteq \partial M$ an embedded graph such that all vertices have degree $2$ or $3$.
	Let $X_0 \subseteq M$ be a good $2$-complex in $M$ with respect to $G_0$.
	Assume $N$ is a subgraph of $S(X_0)$ and $G \subseteq \partial M$ is a thin trivalent graph (Definition \ref{thin trivalent graph}) of $G_0$ in $\partial M$.
    
	(i)
	Let $M_1$, $M_2$, \ldots, $M_s$
	be the components obtained by
	cutting off $X_0$ from $M$ (``cut off'' means to delete the set from the space and do a path compactification),
	and $i_k: M_k \to M$ ($\forall k \in \{1,2,\ldots,m\}$) is continuous map induced by the ``cutting off'' ($i_k \mid_{Int(M_k)}$ is an inclusion map).
	Let $G_k = \{x \in \partial M_k \mid i_k(x) \in G \cup N\}$.
	$(M,X_0,G \cup N)$ is called \emph{appropriate} if:
	for each $k \in \{1,2,\ldots,m\}$,
	$G_k$ is an embedded graph such that all vertices have degree $2$ or $3$,
	and $(M_k,G_k)$ is appropriate.
	
	(ii)
	Assume $(M,X_0,G \cup N)$ is appropriate.
	If $X_k \subseteq M_k$ is a $(M_k,G_k)$-simple $2$-complex ($\forall k \in \{1,2,\ldots,s\}$),
	we say the $2$-complex $X = \bigcup_{k=1}^{s} i_k(X_k)$ is a \emph{$(M,X_0,G \cup N)$-simple $2$-complex}.
	In addition, for all $k \in \{1,2,\ldots,s\}$ and $X^{'}_{k} \in sub(X_k)$,
	we say $\bigcup_{k=1}^{s} i_k(X^{'}_{k})$ is an \emph{$X_0$-good subcomplex} of $X$,
	and we denote by $sub_{X_0}(X)$ the set consisting of all $X_0$-good subcomplexes of $X$.
\end{defn}

\subsection{The construction of the $(M,G)$-simple $2$-complex}\

\begin{prop}\label{exists-regular}
	Let $M$ be a compact $3$-manifold with nonempty boundary and 
	$G \subseteq \partial M$ an embedded graph such that all vertices have degree $2$ or $3$.
	Assume $(M,G)$ is appropriate,
	then there exists a $(M,G)$-simple $2$-complex.
\end{prop}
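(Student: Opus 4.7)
The plan is to build $X$ in two stages: first in a collar of $\partial M$ so that the local structure along $G$ is right, then in the interior of $M$ using the appropriateness hypothesis via covering spaces.

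Stage 1 (collar portion). First I would pick a closed bicollar $\mathcal{N}\cong\partial M\times[0,1]$ of $\partial M$ in $M$ with $\partial M=\partial M\times\{0\}$, and put $X_\partial := G\times[0,\tfrac12]\subseteq\mathcal{N}$. A direct local check shows that $X_\partial$ already has the required local models along $G$: the half-plane $(\mathbb{R}^{2}_+,0)$ at each degree-$2$ point of $G$, and the tripod $(\{x=0,\,y\ge 0,\,z\ge 0\}\cup\{y=0,\,z\ge 0\},0)$ at each degree-$3$ vertex, matching Definition \ref{relative}(i) and Figure \ref{points in simple complex}(a).

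Stage 2 (interior extension). Set $M_0=\overline{M\setminus\mathcal{N}}$ and $G_0=G\times\{\tfrac12\}\subseteq\partial M_0$; the components $A_1^0,\ldots,A_n^0$ of $\partial M_0\setminus G_0$ are in natural bijection with the components $A_1,\ldots,A_n$ of $\partial M\setminus G$. For each $k$, I would pick $x_k\in A_k^0$ and apply Lemma \ref{appropriate = lifted} to obtain a covering $p_k\colon(\tilde M_k,\tilde x_k)\to(M,x_k)$ with $p_{k*}\pi_1(\tilde M_k,\tilde x_k)=i_*\pi_1(A_k,x_k)$, along with a lift $\tilde A_k^0\subseteq\partial\tilde M_k$ of $A_k^0$. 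I would then choose, inside each $\tilde M_k$, a compact $3$-dimensional submanifold $\tilde B_k\supseteq\tilde A_k^0$ on whose interior $p_k$ is an embedding, arranged so that the images $B_k:=p_k(\tilde B_k)$ jointly cover $M_0$ and meet pairwise only along an embedded $2$-complex realizing the three local models of Figure \ref{points in simple complex}. Concretely, I would fix a sufficiently fine triangulation $T$ of $M_0$ with $G_0$ a subcomplex and each closed star of $T$ evenly covered by every relevant $p_k$, then assign each $3$-simplex $\sigma$ of $T$ to whichever $B_k$ is ``first reached from $\tilde A_k^0$'' in $\tilde M_k$. Appropriateness is exactly what makes this rule unambiguous: for an edge $e$ of $G_0$ whose two sides lie in the same component $A_k^0$, appropriateness prevents those two sides from being identified in $\tilde M_k$, so a $3$-simplex on each side of $e$ gets assigned unambiguously.

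Stage 3 (assembly and verification). Setting $X := X_\partial\cup\bigcup_{k=1}^n \overline{\partial B_k\setminus\partial M_0}$ produces the desired $(M,G)$-simple $2$-complex: the local forms required by Definition \ref{relative}(i) at interior points follow from the $\partial B_k$ being embedded surfaces meeting transversely in at most three sheets; the component bijection holds by the tiling property; and the $\pi_1$-condition follows from $j_*\pi_1(B_k,x_k)=p_{k*}\pi_1(\tilde B_k,\tilde x_k)\subseteq p_{k*}\pi_1(\tilde M_k,\tilde x_k)=i_*\pi_1(A_k,x_k)$, with the reverse containment automatic since $A_k\subseteq B_k$. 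The main obstacle is Stage 2: arranging the $B_k$ to \emph{exactly} tile $M_0$ with mutual intersections modelled only by the three prescribed local forms (no quadruple points or higher-codimensional incidences). I expect to address this by a general-position perturbation of the triangulation after the initial ``first reach'' assignment, relying on the joint consistency expressed in Definition \ref{appropriate} beyond what the per-component Lemma \ref{appropriate = lifted} already gives.
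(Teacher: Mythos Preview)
Your overall architecture---build the covering spaces $p_k$ with $p_{k*}\pi_1=i_{k*}\pi_1(A_k)$, lift the boundary pieces, and then carve out domains $B_k$ that tile $M$---is exactly the skeleton the paper uses. The $\pi_1$-verification in Stage~3 is also the right one. The collar stage is harmless but unnecessary: once the $B_k$ are chosen so that $p_k$ carries $\tilde A_k$ homeomorphically onto $\overline{A_k}$, the boundary local models come for free.

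The genuine gap is your handling of the local models in the interior. You propose to obtain the (b)/(c) pictures of Figure~\ref{points in simple complex} by a ``general-position perturbation of the triangulation'' after the tiling is built. But the constraint here is not transversality of independent surfaces: the pieces $B_k$ must continue to \emph{tile} $M_0$ (disjoint interiors, union equal to $M_0$), and the $2$-complex is $X=\{x:\#p_0^{-1}(x)\ge 2\}$ for the induced map $p_0\colon\coprod B_k\to M$. Perturbing the $\partial B_k$ independently to general position destroys the tiling; perturbing them compatibly just moves $X$ without lowering $\#p_0^{-1}$. So general position alone does not reduce a point with $\#p_0^{-1}(x)\ge 4$ to one with $\#p_0^{-1}(x)\le 3$, nor a vertex with $\#p_0^{-1}(v)\ge 5$ to one with $\#p_0^{-1}(v)=4$. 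Relatedly, your ``first reached from $\tilde A_k^0$'' rule is not actually defined: the simplices live in $M_0$, not in any single $\tilde M_k$, and there is no common cover in which a race between the various $\tilde A_k^0$ makes sense.

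What the paper does instead is accept an arbitrary initial good $2$-complex $X(M_0)$ (with possibly high multiplicities) and then run two explicit local moves that \emph{do} preserve the tiling: for an edge $e\subset S(X(M_0))$ with $\#p_0^{-1}(x)\ge 4$ on $\operatorname{Int}(e)$, pick one preimage component $e_0$ and replace $M_0$ by $\overline{(M_0\setminus p_0^{-1}N(e))\cup N(e_0)}$ (``thicken an edge''); once all edges have multiplicity $3$, do the analogous move at vertices with $\#p_0^{-1}(v)\ge 5$ (``thicken a vertex''). Each move strictly decreases a count, and the terminal state is exactly the $(M,G)$-simple local models. If you replace your general-position sentence by these thickening moves, your argument goes through.
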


\begin{proof}
	Assume $A_1,\ldots,A_n$ are the components of $\partial M \setminus G$,
	$x_k \in A_k$ ($k \in \{1,2,\ldots,n\}$).
	Let $p_k: (\tilde{M}_k,\tilde{x}_k) \to (M,x_k)$ be a covering space such that
	$p_{k*}(\pi_1(\tilde{M}_k,\tilde{x}_k)) = i_{k*}(\pi_1(A_k,x_k))$,
	where $p_{k*}: \pi_1(\tilde{M}_k,\tilde{x}_k) \to \pi_1(M,x_k)$, $i_{k*}: \pi_1(A_k,x_k) \to \pi_1(M,x_k)$ are induced by $p_k$ and the inclusion map of $A_k$ into $M$.   
	Then there exists a closed domain $S_k \subseteq \partial \tilde{M}_k$ such that $p_k \mid_{Int(S_k)}$ is a homeomorphism between $Int(S_k)$ and $A_k$. 
	
	Assume $p: \coprod_{k=1}^{n} \tilde{M}_k \to M$ is the map such that $p \mid_{\tilde{M}_k} = p_k$ ($\forall k \in \{1,2,\ldots,n\}$).
	There exists $M_0 \subseteq \coprod_{k=1}^{n} \tilde{M}_k$ such that:
	($\forall k \in \{1,2,\ldots,n\}$)
	assume $L_k = M_0 \cap \tilde{M}_k \subseteq \tilde{M}_k$,
	then $L_k$ is connected,
	$S_k \subseteq L_k$,
	$Int(L_1),\ldots,Int(L_n)$ are homeomorphically embedded into $M$ by $p$,
	and there exists $X(M_0)$ a good $2$-complex in $M$ with respect to $G$
	such that $\{p(Int(L_1)),\ldots,p(Int(L_n))\} = \{the$ $components$ $of$ $Int(M) \setminus X(M_0)\}$
	(i.e. $p$ maps $L_1,\ldots,L_k$ homeomorphically to the closed components obtained by cutting off $X(M_0)$ from $M$).
	Then $M_0$ is closed,
	$p(M_0) = M$,
	$p(M_0 \setminus U) \ne M$ for any open set $U \subseteq M_0$.
	
	Moreover,
	$X(M_0) = \overline{p(\partial M_0) \setminus \partial M}$.
	Assume $p_0: M_0 \to M$ is the map such that $p_0 = p \mid_{M_0}$.
	Then $X(M_0) = \{x \in M \mid \#(p_{0}^{-1}(x)) \geqslant 2\}$,
	and the embedded graph $S(X(M_0)) = \{x \in M \mid \#(p_{0}^{-1}(x)) \geqslant 3\}$.
	
	\begin{figure}\label{thicken e}
		\begin{center}
			\includegraphics[width=0.6\textwidth]{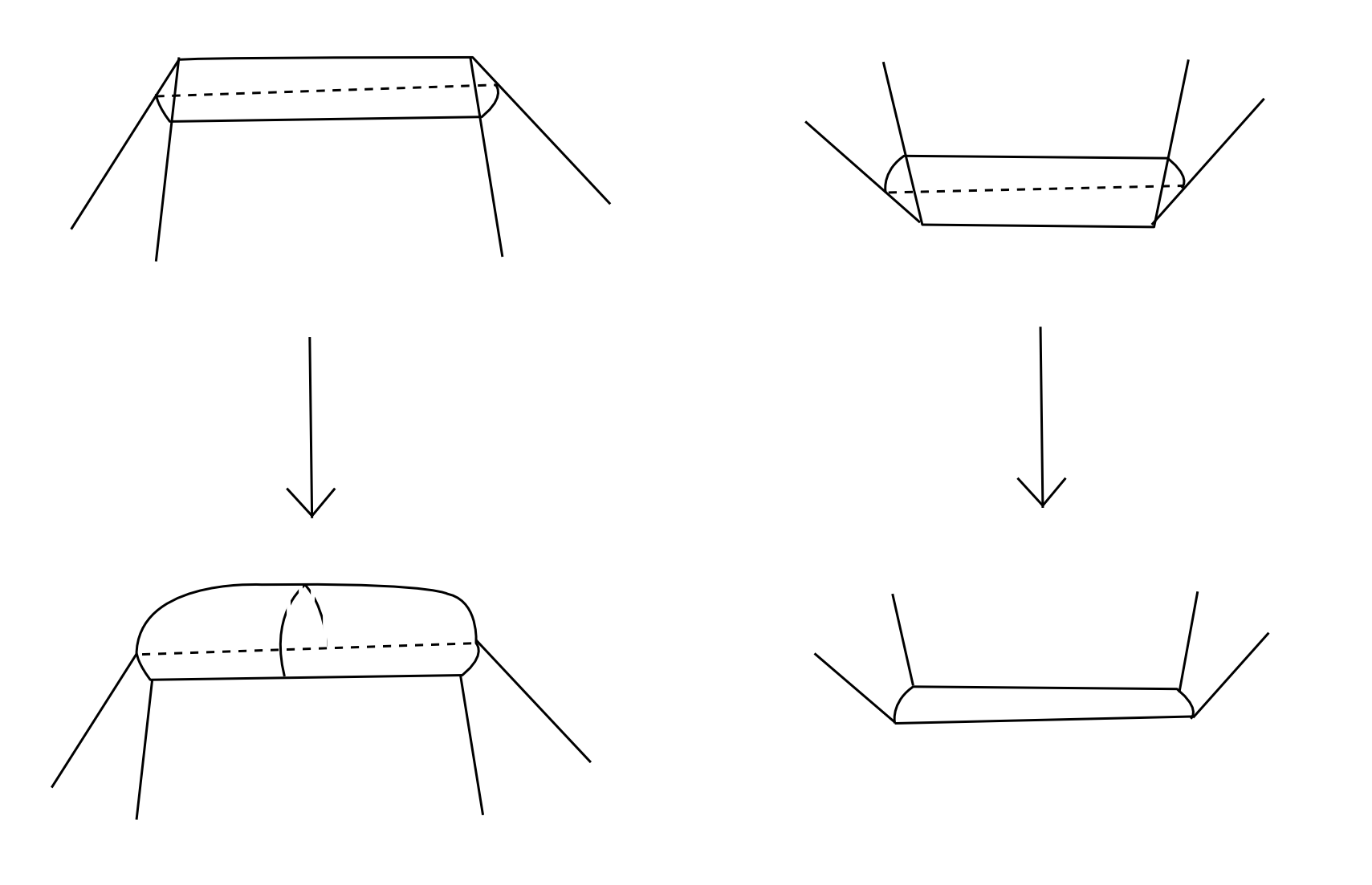}
		\end{center}
		\caption{Thickening an edge.}
	\end{figure}
	
	In the following,
	we adjust $M_0$ step by step (after each step, $X(M_0)$ is also a good $2$-complex in $M$ with respect to $G$).
	$X(M_0)$ will be a $(M,G)$-simple $2$-complex after all steps finished.
	
	(a)
	(Thicken an edge)
	If there exists an edge $e \in E(S(X(M_0)))$,
	$\#(p_{0}^{-1}(x)) \geqslant 4$ for an $x \in Int(e)$,
	we adjust $M_0$ by the process of thicken $e$ (Picture \ref{thicken e}):
	
	$\bullet$
	Choose $N(e)$ an arbitrarily small open regular neighborhood of $e$ in $M$,
	and choose $e_0$ a component of $p_{0}^{-1}(e)$.
	Assume $N(e_0)$ is the component of $p_{0}^{-1}(N(e))$ containing $e_0$.
	Let 
	\begin{center}
	$M^{'}_{0} = \overline{(M_0 \setminus p^{-1}_{0}(N(e))) \cup N(e_0)}$
	\end{center}
    and replace $M_0$ by $M^{'}_{0}$.
	
	Obviously,
	$\#(\{e \in E(S(X(M_0))) \mid \exists x \in Int(e),\#(p_{0}^{-1}(x)) \geqslant 4\})$
	reduces after thickening an edge.
	
	\begin{figure}\label{thicken v}
		\begin{center}
			\includegraphics[width=0.5\textwidth]{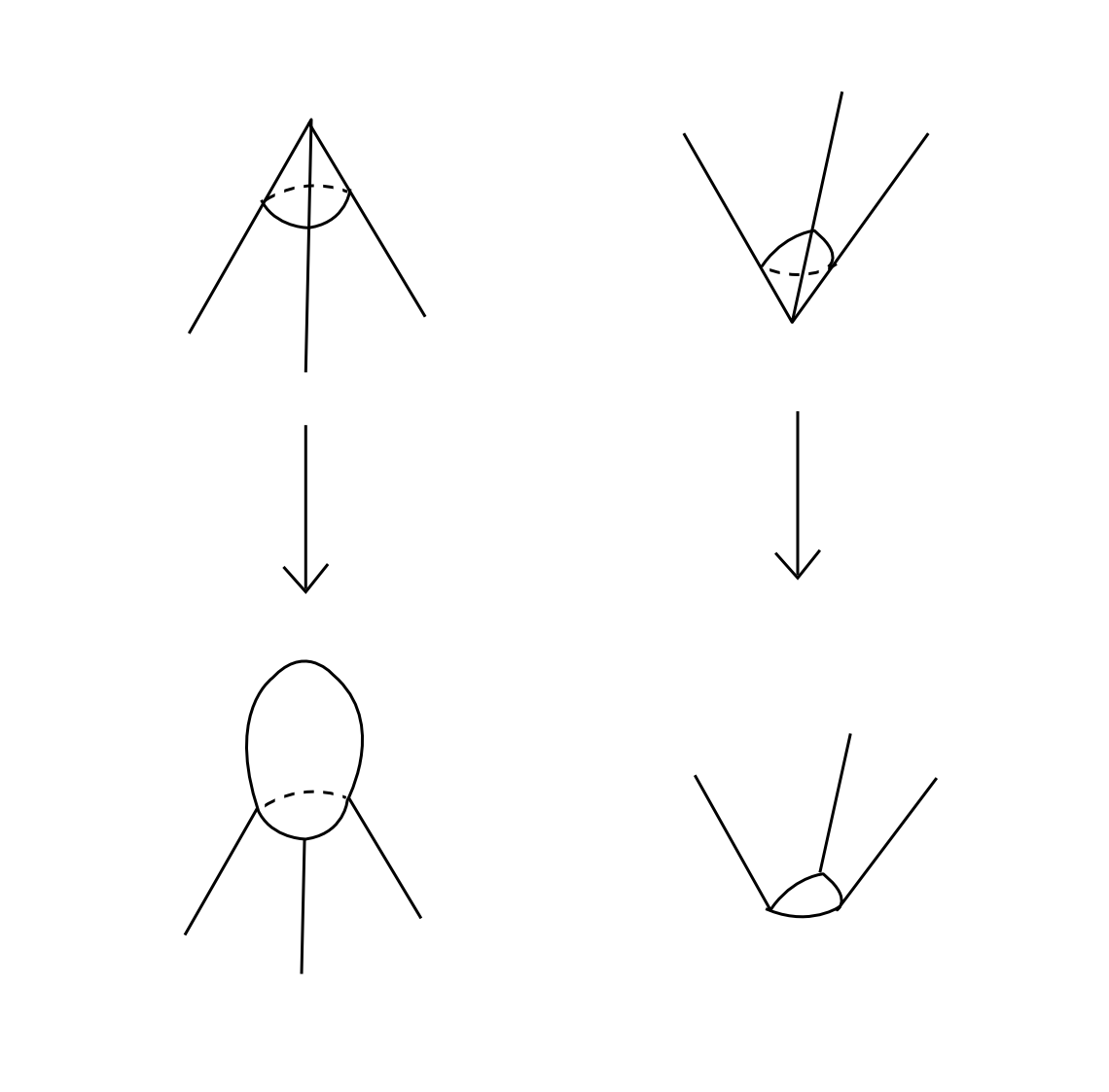}
		\end{center}
		\caption{Thickening a vertex.}
	\end{figure}
	
	(b)
	(Thicken a vertex)
	After all above thickenings (of edges),
	$M$ satisfies that
	for all $e \in E(S(X(M_0)))$ and $x \in Int(e)$,
	$\#(p_{0}^{-1}(x)) = 3$.
	
	If there exists $v \in V(S(X(M_0)))$
	such that $\#(p_{0}^{-1}(v)) \geqslant 5$,
	we adjust $M_0$ by the process of thicken $v$ (Picture \ref{thicken v}):
	
	$\bullet$
	Choose $N(v)$ an arbitrarily small open regular neighborhood of $v$ in $M$,
	and choose $v_0 \in p_{0}^{-1}(v)$.
	Assume $N(v_0)$ is the component of $p_{0}^{-1}(N(v))$ containing $v_0$.
	Let 
	\begin{center}
	$M^{'}_{0} = \overline{(M_0 \setminus p_{0}^{-1}(N(v))) \cup N(v_0)}$
	\end{center}
and replace $M_0$ by $M^{'}_{0}$.

	Obviously,
	the number of $v \in V(S(X(M_0)))$ such that $\#(p_{0}^{-1}(v)) \geqslant 5$ reduces after thickening a vertex.
	The edges produced in thickening a vertex satisfy that for each $x$ in their interior, $\#(p_{0}^{-1}(x)) = 3$.
	
	After all above thickenings (of vertices),
	$M$ satisfies that
	$\#(p_{0}^{-1}(v)) = 4$ for each $v \in \{v \in V(S(X(M_0))) \mid deg_{S(X(M_0))}(v) > 2\}$.
	We denote $\{v \in V(S(X(M_0))) \mid deg_{S(X(M_0))}(v) > 2\} = \{v \in V(S(X(M_0))) \mid deg_{S(X(M_0))}(v) = 4\}$ by $T(X(M_0))$ in the following.
	For each $t \in S(X(M_0)) \setminus (T(X(M_0)) \cup \{v \in V(G) \mid deg_G(v) = 3\})$,
	(then $t$ is either in the interior of an edge of $S(X(M_0))$ or a vertex of $S(X(M_0))$ with degree $2$)
	there exists $N(t)$ an open neighborhood of $t$ in $M$ such that $(N(t) \cap X(M_0),t)$ is homeomorphic to $(\{z = 0\} \cup \{x = 0, z \geqslant 0\},0)$.
	For each $t \in T(X(M_0))$,
	there exists $N(t)$ an open neighborhood of $t$ in $M$ such that $(N(t) \cap X(M_0),t)$ is homeomorphic to
	$(\{z = 0\} \cup \{x = 0, z \geqslant 0\} \cup \{y = 0, x \geqslant 0, z \geqslant 0\},0)$.
	For each $t \in \{v \in V(G) \mid deg_G(v) = 3\}$,
	there exists an open neighborhood $N(t)$ of $t$ in $M$ such that $(N(t) \cap X,t)$
	is homeomorphic to 
	$(\{x = 0, y \geqslant 0, z \geqslant 0\} \cup \{y = 0, z \geqslant 0\},0)$ (since $X(M_0)$ is a good $2$-complex in $M$ with respect to $G$).
	
	Obviously,
	$X(M_0)$ is a $(M,G)$-simple $2$-complex.
\end{proof}

\begin{cor}\label{exists-thin regular}
Let $M$ be a compact $3$-manifold with nonempty boundary and 
$G_0 \subseteq \partial M$ an embedded graph such that all vertices have degree $2$ or $3$.
Let $X_0 \subseteq M$ be a good $2$-complex in $M$ with respect to $G_0$.
Assume $N$ is a subgraph of $S(X_0)$ and $G \subseteq \partial M$ is a thin trivalent graph of $G_0$ in $\partial M$.
If $(M,X_0,G \cup N)$ is appropriate,
then there exists a $(M,X_0,G \cup N)$-simple $2$-complex.
\end{cor}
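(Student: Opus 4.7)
The plan is to reduce the corollary directly to Proposition \ref{exists-regular} by cutting $M$ along $X_0$, constructing a simple $2$-complex in each piece, and then reassembling. Concretely, I would start by forming the cut open manifold: let $M_1,\ldots,M_s$ be the components obtained by cutting off $X_0$ from $M$, let $i_k:M_k\to M$ be the associated maps, and let $G_k=\{x\in\partial M_k\mid i_k(x)\in G\cup N\}$ as in the definition of a $(M,X_0,G\cup N)$-simple $2$-complex. By hypothesis, $(M,X_0,G\cup N)$ is appropriate, which means exactly that each $G_k$ is an embedded graph all of whose vertices have degree $2$ or $3$, and that each pair $(M_k,G_k)$ is appropriate.

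Next I would apply Proposition \ref{exists-regular} to each pair $(M_k,G_k)$ separately. Since each $M_k$ is a compact $3$-manifold with nonempty boundary (its boundary contains the $2$-cells of $X_0$ lying on its frontier), and each $(M_k,G_k)$ is appropriate, the proposition yields a $(M_k,G_k)$-simple $2$-complex $X_k\subseteq M_k$. Setting $X=\bigcup_{k=1}^{s} i_k(X_k)$, the second clause of the definition of a $(M,X_0,G\cup N)$-simple $2$-complex is satisfied by construction, so $X$ is the desired $2$-complex.

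The only point that requires any care is verifying that the graphs $G_k$ on the cut pieces really inherit the degree hypothesis and the appropriateness condition needed to invoke Proposition \ref{exists-regular}. The degree condition is bundled into the definition of $(M,X_0,G\cup N)$ being appropriate, and likewise the appropriateness of each $(M_k,G_k)$ is part of that same definition, so no extra work is required beyond quoting Definition of appropriateness. In particular I do not need to trace the covering space lifts from Lemma \ref{appropriate = lifted} piece by piece; they are handed to me by the hypothesis.

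I expect no substantial obstacle, because the definition of a $(M,X_0,G\cup N)$-simple $2$-complex was set up precisely so that the construction splits across the cut pieces, and the appropriateness hypothesis was packaged to feed Proposition \ref{exists-regular} on each component. The only thing one might want to spell out is the observation that $i_k(X_k)$ glues with $i_\ell(X_\ell)$ correctly along $X_0$: this is automatic since $X_k\cap\partial M_k$ agrees with $G_k$, and $i_k$ restricted to $\partial M_k\setminus G_k$ lands in $\partial M\cup X_0$ by construction of the cutting, so the union $X$ is a $2$-complex in $M$ with the prescribed local structure.
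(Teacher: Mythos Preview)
Your proposal is correct and is exactly the argument the paper intends: the definitions of ``$(M,X_0,G\cup N)$ appropriate'' and ``$(M,X_0,G\cup N)$-simple $2$-complex'' are set up precisely so that cutting along $X_0$ and applying Proposition~\ref{exists-regular} to each piece $(M_k,G_k)$ yields the result, and indeed the paper gives no separate proof beyond remarking that the construction of Proposition~\ref{exists-regular} carries over. Your additional observation about the gluing along $X_0$ is harmless but unnecessary, since the defining equation $X=\bigcup_k i_k(X_k)$ already builds this in.
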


\begin{remark}\rm
	In Proposition \ref{exists-regular},
	We prove that there exists a $(M,G)$-simple $2$-complex if $(M,G)$ is appropriate.
	Auctually,
	a $(M,G)$-simple $2$-complex can be constructed through the proof of Proposition \ref{exists-regular}.
    Similarly,
    we can construct a $(M,X_0,G \cup N)$-simple $2$-complex in Corollary \ref{exists-thin regular}.
    In the rest of this paper,
    we will always assume that a $(M,G)$-simple $2$-complex can be constructed immediately when we know $(M,G)$ is appropriate,
    and assume a $(M,X_0,G \cup N)$-simple $2$-complex can be constructed immediately when we know $(M,X_0,G \cup N)$ is appropriate.
\end{remark}

\section{Inscribed set}\label{section 4}\

\begin{defn}[Inscribed set]\label{inscribed set}\rm 
	For a closed oriented surface $\Sigma$,
	let $f: \Sigma \to \mathbb{R}^{3}$ be an immersion.
	Assume $n = \max_{x \in \mathbb{R}^{3} \setminus f(\Sigma)}\omega(f,x)$.
	The following process induces decreasingly on $k$, until $k = 1$.
	
	For step $1$:
	If $(D_n(f),G_n(f))$ is appropriate,
	then there exists $\tilde{X}_n$ a $(D_n(f),G_n(f))$-simple $2$ complex.
	Let $\zeta_n = \{(\tilde{X}_n,X_n) \mid X_n \in sub(\tilde{X}_n)\}$.
	If $(D_n(f),G_n(f))$ is not appropriate,
	then $\zeta_n = \emptyset$.
	
	For step $n-k+1$ ($1 \leqslant k \leqslant n-1$):
	Assume $\zeta_{k+1}$ is obtained in the step $n-k$.
    $\zeta_k$ is obtained as follows:
	
	For each $A = \{(\tilde{X}_{k+1},X_{k+1}), \ldots, (\tilde{X}_n,X_n)\} \in \zeta_{k+1}$,
	assume $N = \overline{S(X_{k+1}) \setminus S(X_{k+2})}$.
	And we define $Q(A)$ by the following rules:
	
	$\bullet$
	If $(D_k(f),X_{k+1},N \cup G_k(f))$ is appropriate,
	choose $\tilde{X}_k$ a $(D_k(f),X_{k+1},N \cup G_k(f))$-simple $2$-complex.
	Let
	$Q(A) =
	\{(\tilde{X}_k,X_k) \mid X_k \in sub_{X_{k+1}}(\tilde{X}_k)\}$.
	
	$\bullet$
	If $(D_k(f),X_{k+1},N \cup G_k(f))$ is not appropriate,
	then $Q(A) = \emptyset$.
	
	Let $\zeta_k = \bigcup_{A \in \zeta_{k+1}, Q(A) \ne \emptyset} \bigcup_{B \in Q(A)} (A \cup B)$. ($\zeta_k = \emptyset$ if $\zeta_{k+1} = \emptyset$)
	
	In the end,
	we obtain an \emph{inscribed set} $\zeta = \zeta_1$,
	and we obtain the sets $\zeta_2,\ldots,\zeta_n$ ($\zeta_k = \{(\tilde{X}_k,X_k), \ldots, (\tilde{X}_n,X_n)\}$) through the process (call $\zeta_k$ the \emph{$k$th-inscribed set} of $\zeta$).
\end{defn}

\begin{defn}\label{good complex set}\rm
	For a closed oriented surface $\Sigma$,
	let $f: \Sigma \to \mathbb{R}^{3}$ be an immersion.
	Assume $n = \max_{x \in \mathbb{R}^{3} \setminus f(\Sigma)}\omega(f,x)$.
	Let $\zeta$ be an inscribed set of $f$.
	An element $\{(\tilde{X}_1,X_1),\ldots,(\tilde{X}_n,X_n)\} \in \zeta$ is \emph{good}
	if $X_1 = \emptyset$.
	We denote $\{A \in \zeta \mid A$ $is$ $good\}$ by $I(\zeta)$.
\end{defn}

\begin{figure}\label{N}
		\includegraphics[width=0.29\textwidth]{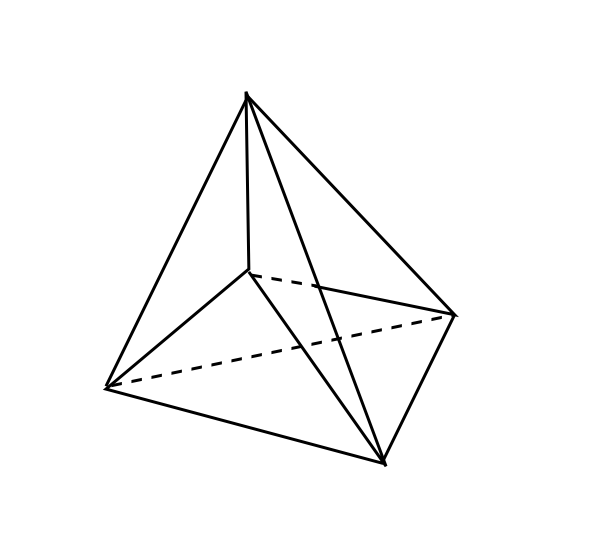}
		(a)
		\includegraphics[width=0.29\textwidth]{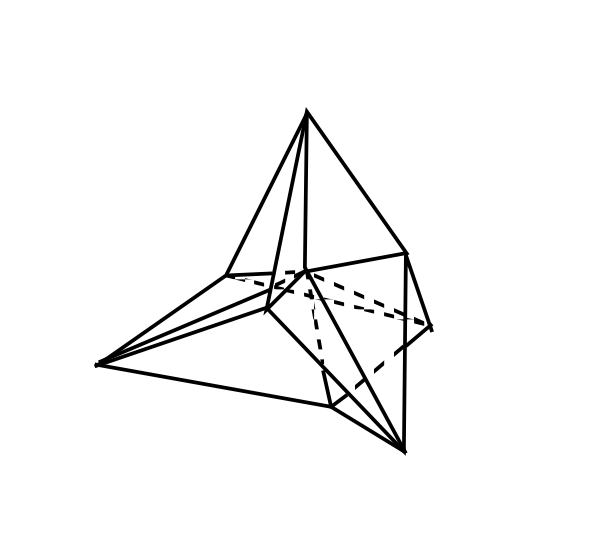}
		(b)
		\includegraphics[width=0.29\textwidth]{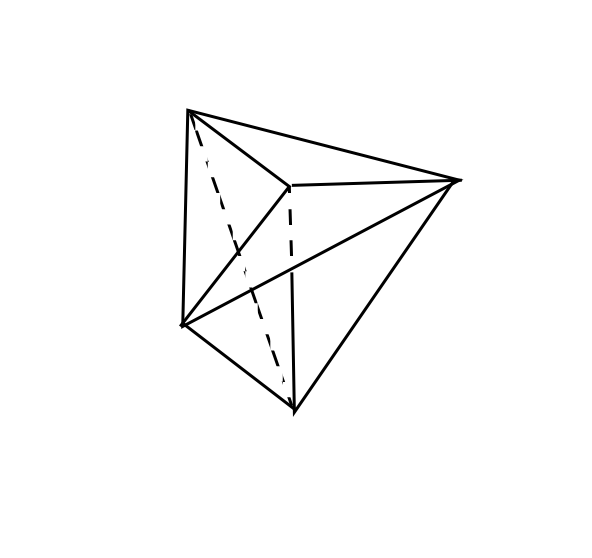}
		(c)
	\caption{For each $p \in \{v \in V(S(X_k)) \mid deg_{S(X_k)}(v) > 2\} \setminus X_{k+1}$ ($k \in \{2,\ldots,n\}$),
	(a), (b), (c) describes $X_k,X_{k-1},X_{k-2}$ near $p$.}
\end{figure}

\begin{defn}[Inscribed map]\label{inscribed map}\rm
	For a closed oriented surface $\Sigma$,
	let $f: \Sigma \to \mathbb{R}^{3}$ be an immersion.
	Assume $n = \max_{x \in \mathbb{R}^{3} \setminus f(\Sigma)}\omega(f,x)$,
	and $\{(\tilde{X}_1,X_1),\ldots,(\tilde{X}_n,X_n)\} \in I(\zeta)$.
	For each $k \in \{1,2,\ldots,n\}$,
	let $g_k: D_k \to \mathbb{R}^{3}$ be an embedding such that $g_k(D_k) = D_k(f)$,
	and let $A_k = g_{k}^{-1}(X_k)$, $B_k = g_{k}^{-1}(X_{k+1})$ ($X_{n+1} = \emptyset$).
	Let $g: \coprod_{k=1}^{n} D_k \to \mathbb{R}^{3}$ be a map such that $g \mid_{D_k} = g_k, \forall k \in \{1,2,\ldots,n\}$.
	We obtain a map $g_1: M \to \mathbb{R}^{3}$ by following procedure:
	
	$\bullet$
	We cut off $A_k \cup B_k$ from $D_k$ to obtain a space $D^{'}_{k}$
	($k \in \{1,\ldots,n\}$).
	Assume $g_0: \coprod_{k=1}^{n} D^{'}_{k} \to \mathbb{R}^{3}$ is the map induced by $g$.
	For all $k \in \{2,\ldots,n\}$ and $\alpha$ a $2$-cell of $X_k$,
	assume $\alpha^{+}_{1},\alpha^{-}_{1}$ (respectively, $\alpha^{+}_{2},\alpha^{-}_{2}$) are the $2$ components of $(g_{0} \mid_{D_{k}^{'}})^{-1}(\overline{\alpha})$ (respectively, $(g_{0} \mid_{D_{k-1}^{'}})^{-1}(\overline{\alpha})$) which lie in the left and right side respectively.
	Let $h$ be the equivalence relation such that $x \stackrel{h}{\sim} y$ if
	there exists 
	$k \in \{2,\ldots,n\}$ and $\alpha$ a $2$-cell of $X_k$,
	$x \in \alpha_{1}^{+}, y \in \alpha_{2}^{-}, g_0(x) = g_0(y)$ or $x \in \alpha_{2}^{+}, y \in \alpha_{1}^{-}, g_0(x) = g_0(y)$.
	Let $M = \coprod_{k=1}^{n} D^{'}_{k} /\sim_h$,
	and $g_1: M \to \mathbb{R}^{3}$ is induced by $g_0$.
	
	We say $g_1$ is an \emph{inscribed map} of $f$
	\emph{associated} to $\{(\tilde{X}_1,X_1),\ldots,(\tilde{X}_n,X_n)\}$.
\end{defn}

\begin{lm}
	$M$ is a (compact, connected) $3$-manifold, and $g_1$ is an immersion.
\end{lm}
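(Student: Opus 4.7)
The plan is to establish compactness, the $3$-manifold structure, the immersion property, and connectedness by a local analysis driven entirely by the inscribed-set data. \emph{Compactness} is immediate: each $D_k$ is compact, each cut space $D_k'$ (obtained from $D_k$ by cutting along the compact subcomplex $A_k\cup B_k$) is compact, and $M$ is the quotient of the finite disjoint union $\coprod_{k=1}^n D_k'$ by the closed equivalence relation $\sim_h$, hence compact.

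For the \emph{manifold} and \emph{immersion} claims, I would exhibit, at each $p\in M$, an open neighborhood $U$ and a PL homeomorphism $U\cong\mathbb{R}^3$ (or $\mathbb{R}^3_+$, if $p\in\partial M$) under which $g_1|_U$ becomes a local embedding into $\mathbb{R}^3$. The cases are organized by the stratum in which $g_1(p)$ sits with respect to $f(\Sigma)$ and the inscribed complexes $X_2,\ldots,X_n$. (i) If $g_1(p)\in\mathrm{Int}(D_k(f))\setminus(X_k\cup X_{k+1})$, then $p$ has a single preimage in $\mathrm{Int}(D_k')$ and $g_k$ is already a local embedding there. (ii) If $g_1(p)$ lies in the interior of a $2$-cell $\alpha$ of $X_k$ away from $S(X_k)$, then the cross-identifications $\alpha_1^+\sim\alpha_2^-$ and $\alpha_1^-\sim\alpha_2^+$ from Definition \ref{inscribed map} reassemble the four local half-balls (two cut out of $D_k'$ by $X_k$, two cut out of $D_{k-1}'$ by the same $X_k$) into two disjoint open $3$-balls in $M$, each mapped homeomorphically onto a neighborhood of $g_1(p)$ in $\mathbb{R}^3$. (iii) If $g_1(p)$ lies on an edge or a non-junction vertex of $S(X_k)$ only, then $X_k$ near $g_1(p)$ realizes one of the three local models of Figure \ref{points in simple complex}; enumerating the wedges cut out of $D_k$ and $D_{k-1}$ along that model and tracking the cross-gluing produces the required Euclidean (or half-Euclidean) charts directly. (iv) The delicate case is when $g_1(p)$ is a junction point of several inscribed complexes as in Figure \ref{N}: I would enumerate the wedges of $D_{k-2}',D_{k-1}',D_k'$ around $g_1(p)$, record precisely which pairs are identified by $\sim_h$, and verify case by case that the resulting quotient is locally Euclidean. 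In every case the chart is assembled from the local embeddings $g_k$ on each $D_k'$-piece, so the same construction shows that $g_1|_U$ is an embedding onto its image, yielding the immersion claim.

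\emph{Connectedness} will follow once the chart structure is in place: by the $(M,G)$-simple structure, every component of $D_k'$ for $k\geq 2$ contains a boundary face of some $2$-cell of $X_k$, and is therefore $\sim_h$-glued to a component of $D_{k-1}'$; iterating brings every piece back to $D_1'$, which by the goodness assumption $X_1=\emptyset$ is $D_1$ with only a neighborhood of $B_1=g_1^{-1}(X_2)$ removed, and the residual components are then joined through $\partial M$ by the connectedness of $\Sigma$. The \emph{main obstacle} is case (iv): verifying systematically that at every junction point -- where $S(X_k)$ meets $X_{k+1}$, or where the pattern of Figure \ref{N} appears -- the cross-gluing $\sim_h$ produces a manifold point rather than a branched or higher-dimensional singular point. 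This is precisely where the good-subcomplex hypothesis $X_k\in sub_{X_{k+1}}(\tilde X_k)$, together with the regularity conclusion of Lemma \ref{case 1 appropriate}(ii) applied stratum by stratum, must be invoked to rule out spurious identifications and secure the local Euclidean model.
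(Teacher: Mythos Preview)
Your local case analysis for the manifold and immersion claims is essentially the paper's own argument: the paper only spells out the hardest case---a vertex $p$ of $S(X_k)$ of degree $>2$ with $p\notin X_{k+1}$, where it invokes the three local pictures of Figure \ref{N} for $X_k$, $X_{k-1}$, $X_{k-2}$ together with $N(p)\cap X_{k-3}=\emptyset$---but your (i)--(iv) breakdown is the same strategy with the easier strata filled in.

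For connectedness, however, your argument and the paper's diverge, and yours has a gap. You assert that, by the $(M,G)$-simple structure, every component of $D_k'$ for $k\geq 2$ contains a face of some $2$-cell of $X_k$, and hence is glued \emph{down} to $D_{k-1}'$. But the simple-complex condition in Definition \ref{relative} only guarantees that each complement component meets the boundary of the ambient piece---here $\partial D_k(f)$ or $X_{k+1}$---not that it meets $X_k$. So the induction is pointed the wrong way: components are guaranteed to glue \emph{up} to $D_{k+1}'$ or to reach $\partial M$, not down to $D_{k-1}'$. The paper bypasses this with a direct ray argument: for any $p\in g_1(M)$, take a ray $l\subseteq\mathbb{R}^3$ from $p$ parallel to the $x$-axis; since $g_1^{-1}(x)=\emptyset$ for $x\in l\setminus D_1(f)$, every component of $g_1^{-1}(l)$ must hit $\partial M$, so every preimage of $p$ lies in the component of $M$ containing $\partial M\cong\Sigma$. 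Your approach is repairable by reversing the direction of the induction, but the ray argument is shorter and avoids the bookkeeping.
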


\begin{proof}
	We can verify that for each $p \in \mathbb{R}^{3}$,
	every $t \in g_{1}^{-1}(p)$ has an open neighborhood  homeomorphic to $\mathbb{R}^{3}$ or $\mathbb{R}^{3}_{+}$,
	and $g_1$ is a locally homeomorphism at $t$.
	We only explain this for the point $p$ that is a vertex of $S(X_k)$ with degree greater than $2$:
	If $p \in \{v \in V(S(X_k)) \mid deg_{S(X_k)}(v) > 2\}$ and
	$p \notin X_{k+1}$ ($k \in \{2,\ldots,n\}$),
	then there exists $N(p)$ an open neighborhood of $p$ in $\mathbb{R}^{3}$ such that
	$N(p) \cap X_k, N(p) \cap X_{k-1}, X(p) \cap X_{k-2}$ are homeomorphic to Figure \ref{N} (a), (b), (c),
	and $p \cap X_{k-3} = \emptyset$.
	So we can verify that every point in $g_{1}^{-1}(p)$ has an open neighborhood which is homeomorphic to $\mathbb{R}^{3}$
	and homeomorphically embedded into $\mathbb{R}^{3}$ by $g_1$.
	
	Moreover,
	if $p \in g_1(M) \subseteq \mathbb{R}^{3}$,
	assume $l \subseteq \mathbb{R}^{3}$ is a ray starting from $p$ and parallel to $x$-axis.
	For each $x \in l \setminus D_1(f)$, $g_{1}^{-1}(x) = \emptyset$.
	So every component of $g_{1}^{-1}(l)$ contains a point in $\partial M$.
	Then every point in $g_{1}^{-1}(p)$ is in the same connected component with $\partial M$.
	Hence $M$ is connected.
\end{proof}

In the following , we say an inscribed map of $f$ associated to $\{(\tilde{X}_1,X_1),\ldots,(\tilde{X}_n,X_n)\}$
is an \emph{extension} of $f$ \emph{related to} $\{(\tilde{X}_1,X_1),\ldots,(\tilde{X}_n,X_n)\}$.

\begin{ep}\rm
	Let $f: S^{2} \to \mathbb{R}^{3}$ be an immersion described by Figure \ref{example 1} (a).
	Figure \ref{example 1} (b) describes $D_3(f)$, $D_2(f)$, $D_1(f)$, $G_3(f)$, $G_2(f)$.
	Figure \ref{zeta of example 1} gives an inscribed set $\zeta = \{\{(\tilde{X}_1,X_1),(\tilde{X}_2,X_2),$ $(\tilde{X}_3,X_3)\}\}$ of $f$ ($\#(\zeta) = 1$).
	Then $\{(\tilde{X}_1,X_1),(\tilde{X}_2,X_2),(\tilde{X}_3,X_3)\} \in I(\zeta)$.
	Hence we can construct (exactly) one extension of $f$.
	And Figure \ref{inscribed map of example 1} shows the construction of this extension (the extension of $f$ related to $\{(\tilde{X}_1,X_1),(\tilde{X}_2,X_2),(\tilde{X}_3,X_3)\}$).
\end{ep}

\begin{figure}\label{example 1}
	\includegraphics[width=0.46\textwidth]{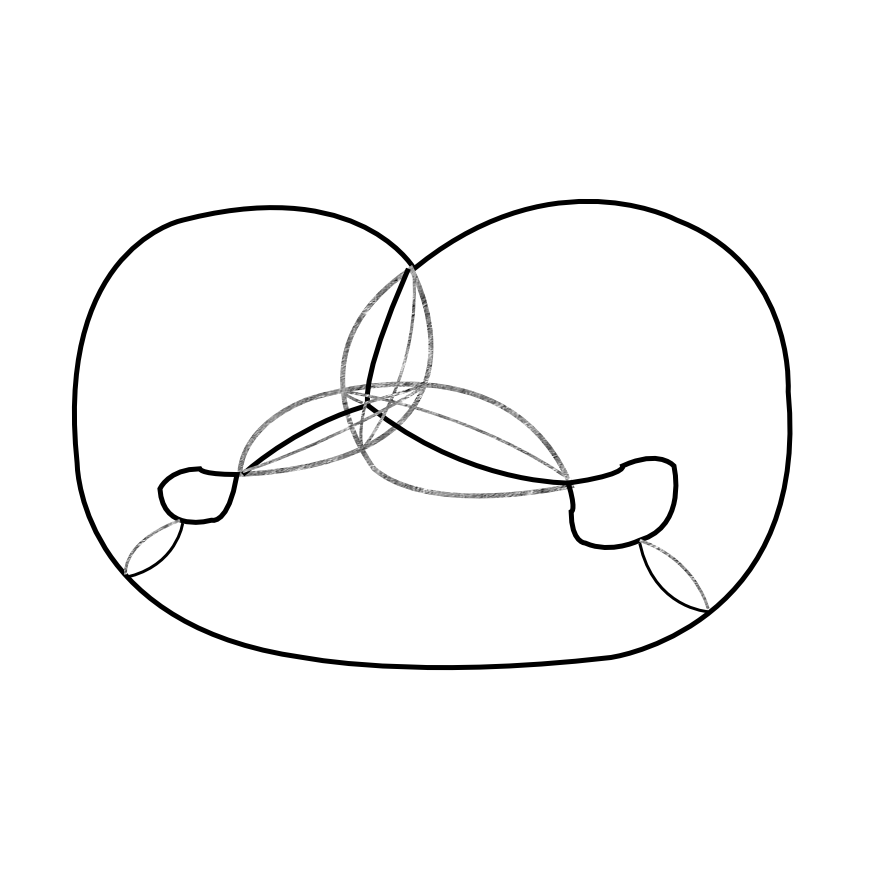}
	(a)
	\includegraphics[width=0.46\textwidth]{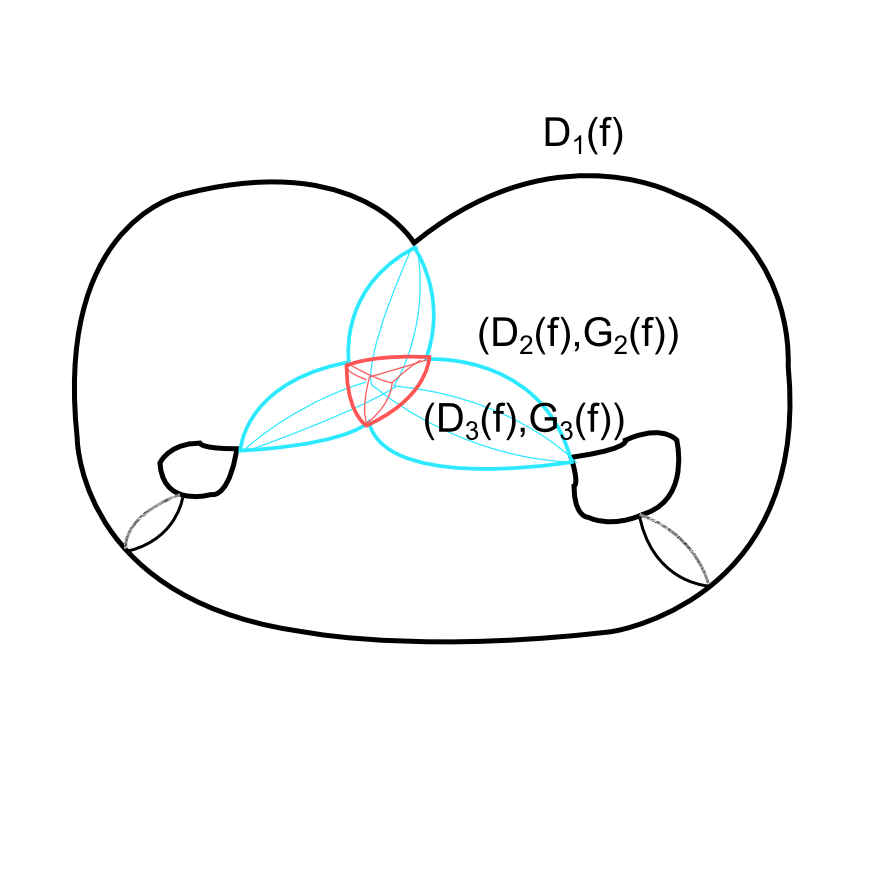}
	(b)
	\caption{$f: S^{2} \to \mathbb{R}^{3}$.}
\end{figure}

\begin{figure}\label{zeta of example 1}
	\includegraphics[width=0.33\textwidth]{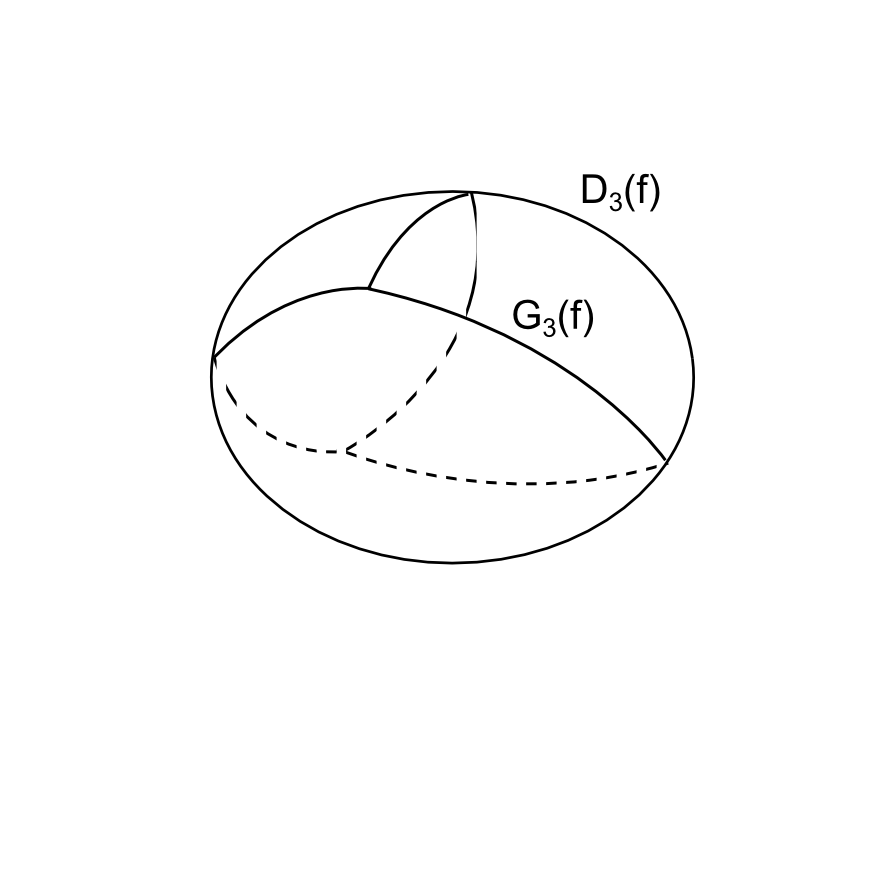}
	\includegraphics[width=0.33\textwidth]{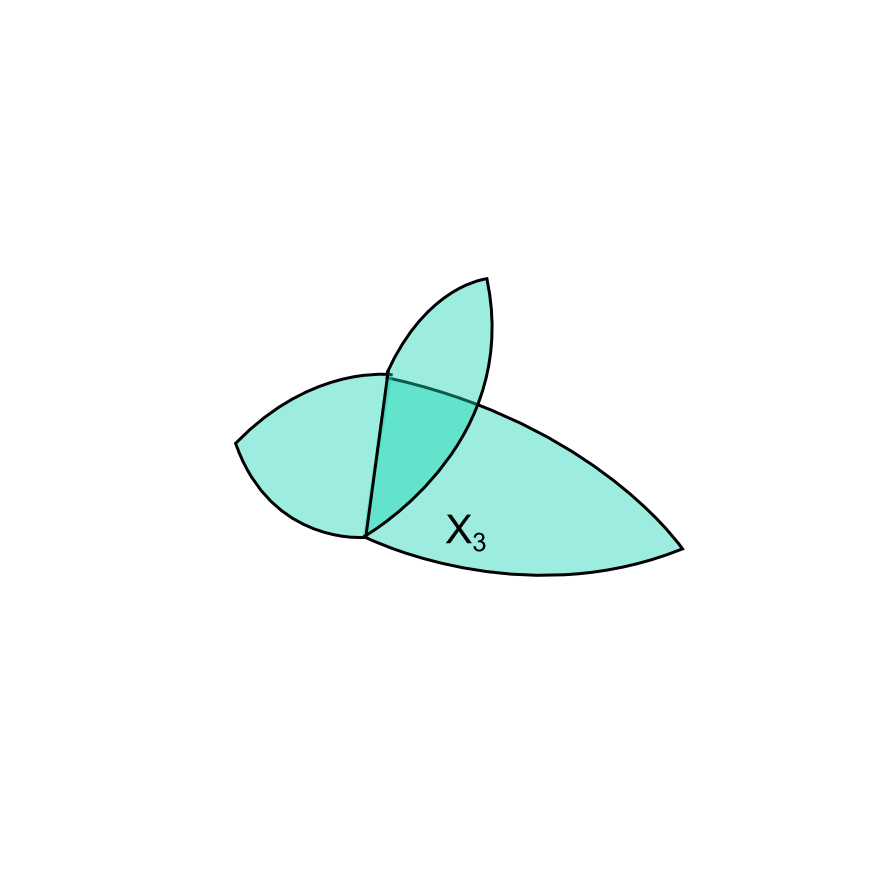}
	\includegraphics[width=0.33\textwidth]{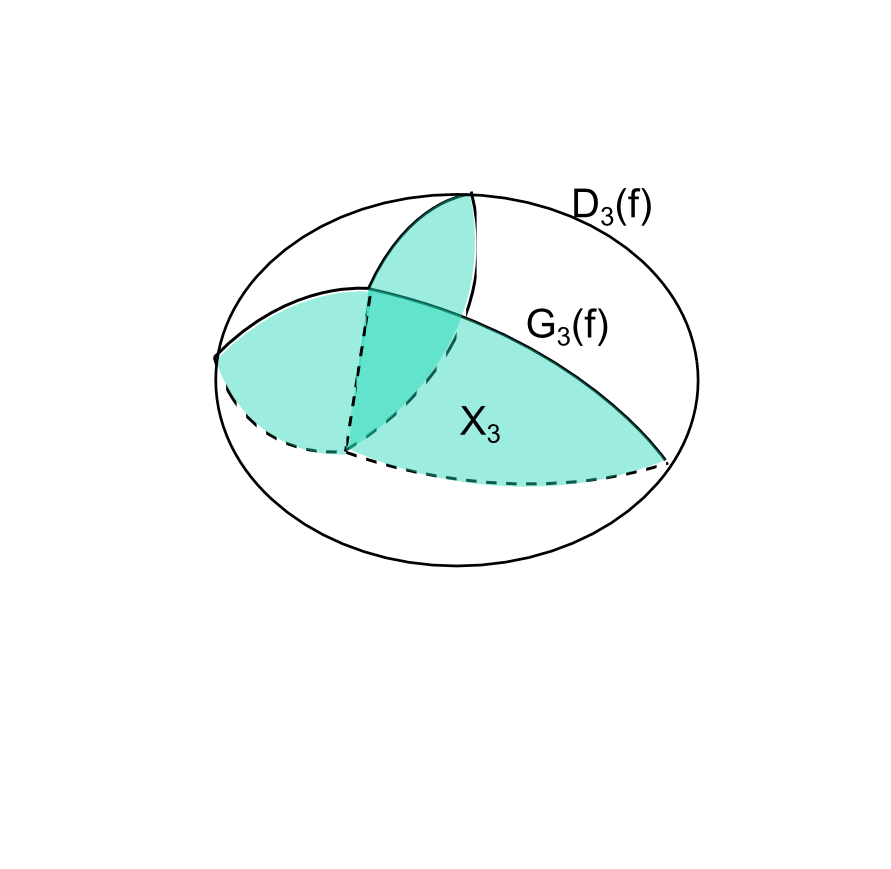}
	\includegraphics[width=0.33\textwidth]{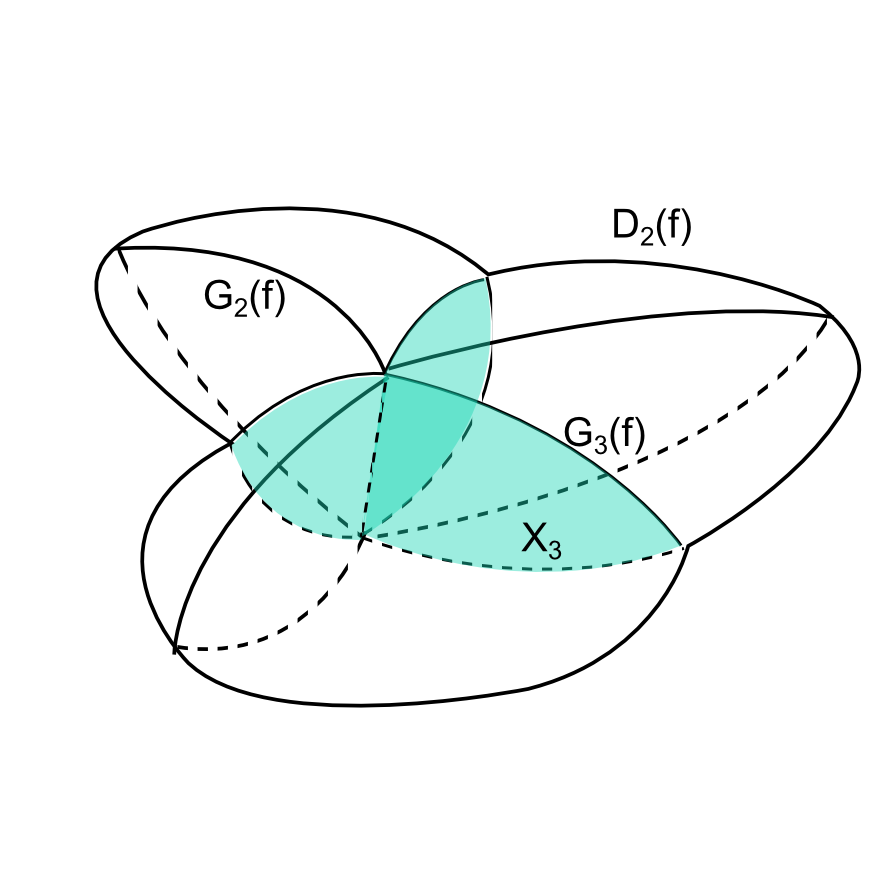}
	\includegraphics[width=0.33\textwidth]{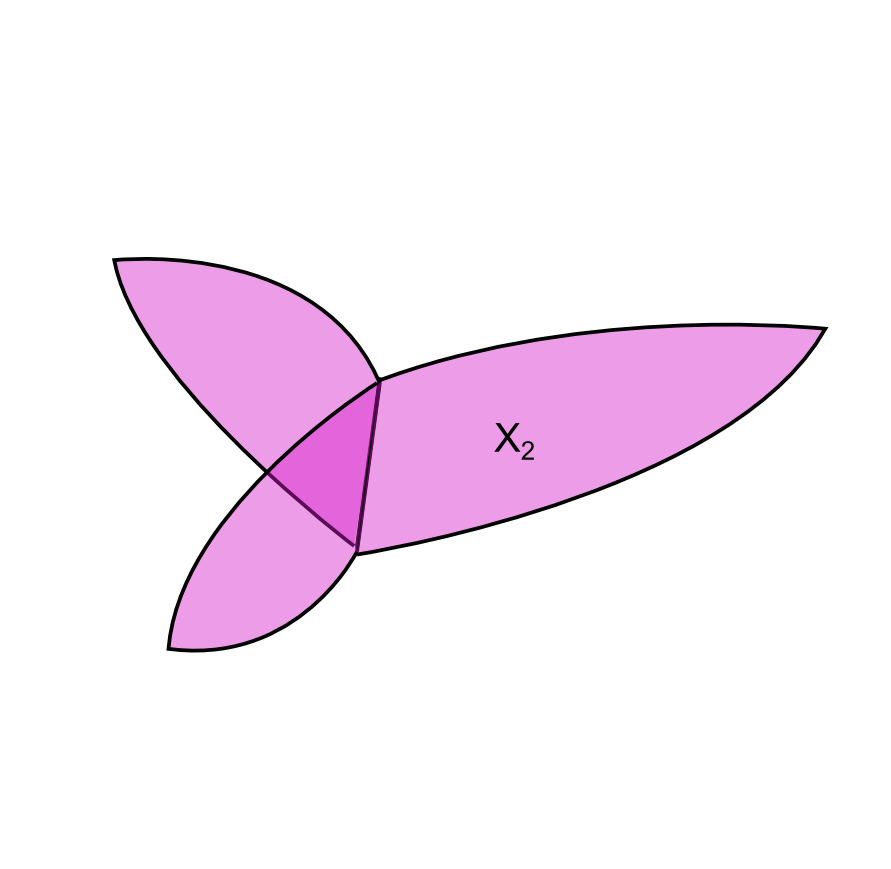}
	\includegraphics[width=0.33\textwidth]{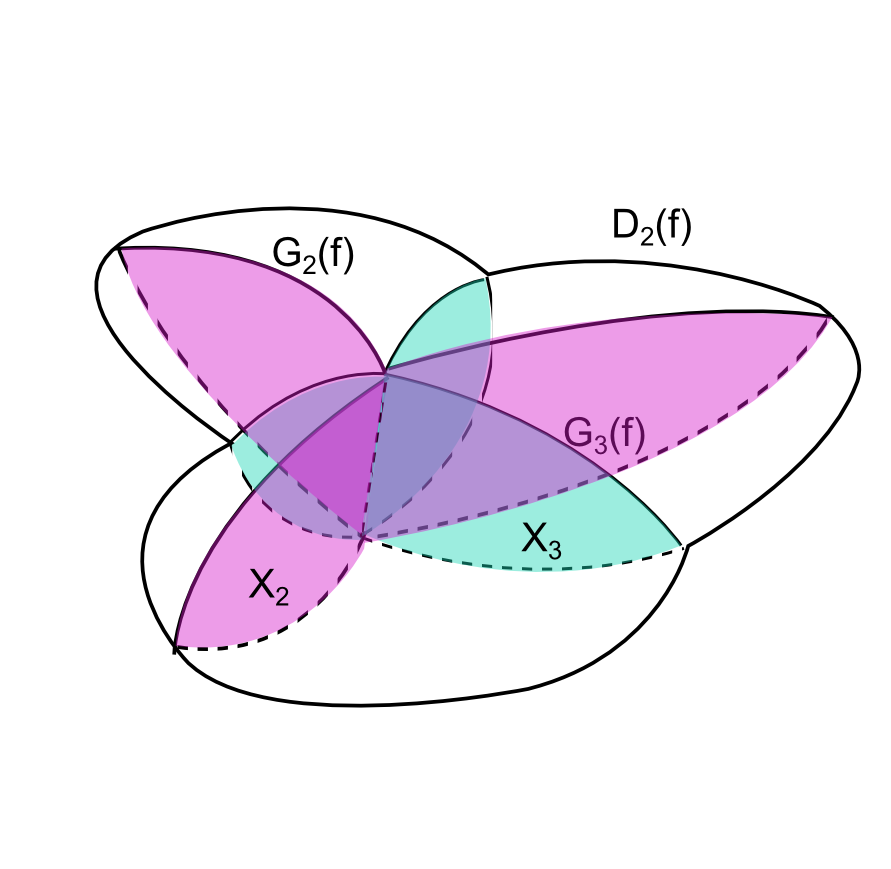}
	\includegraphics[width=0.5\textwidth]{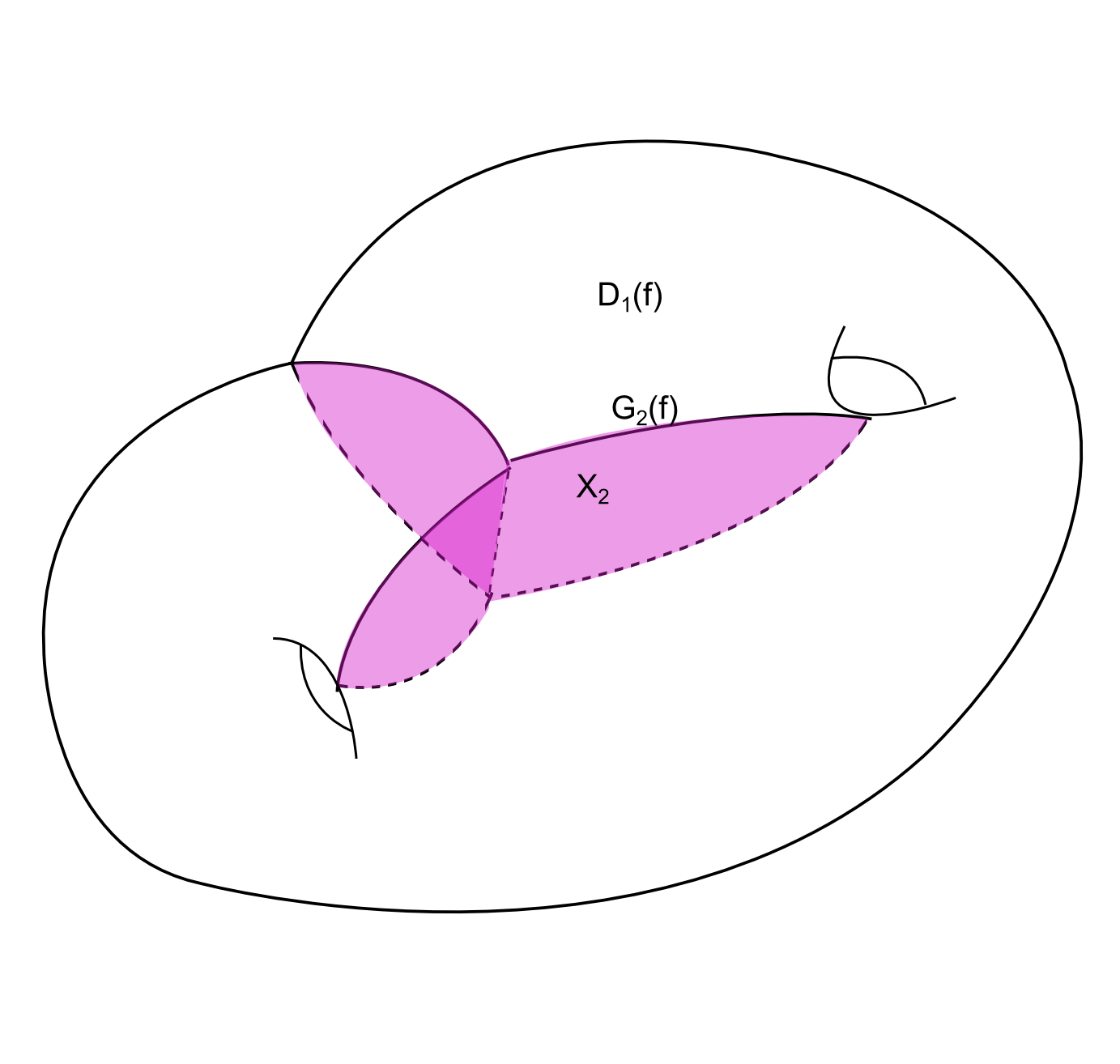}
	\includegraphics[width=0.5\textwidth]{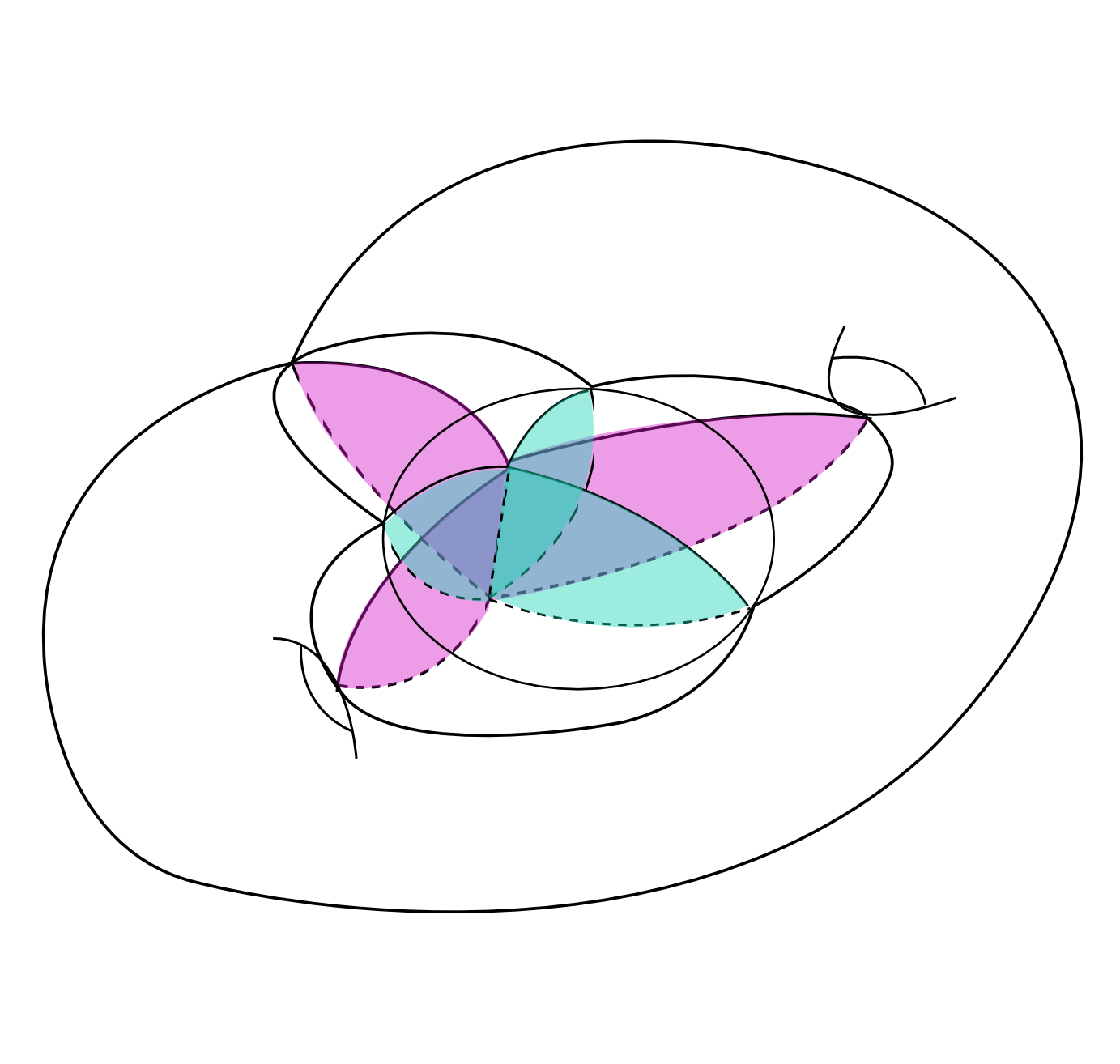}
	\caption{The inscribed set $\zeta = \{\{(\tilde{X}_1,X_1),(\tilde{X}_2,X_2),$ $(\tilde{X}_3,X_3)\}\}$ of $f$.}
\end{figure}

\begin{figure}\label{inscribed map of example 1}
	\begin{center}
	\includegraphics[width=0.49\textwidth]{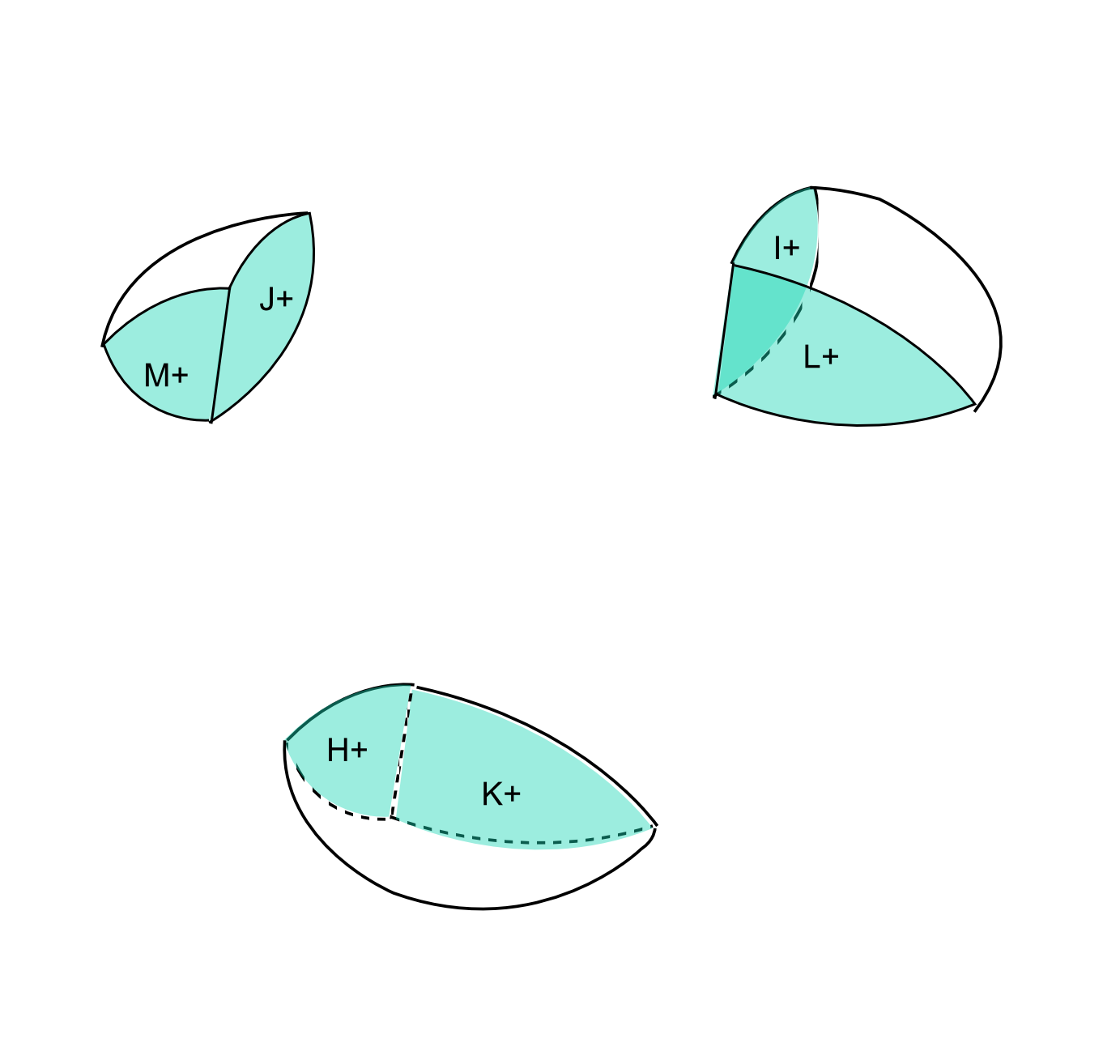}
	\includegraphics[width=0.49\textwidth]{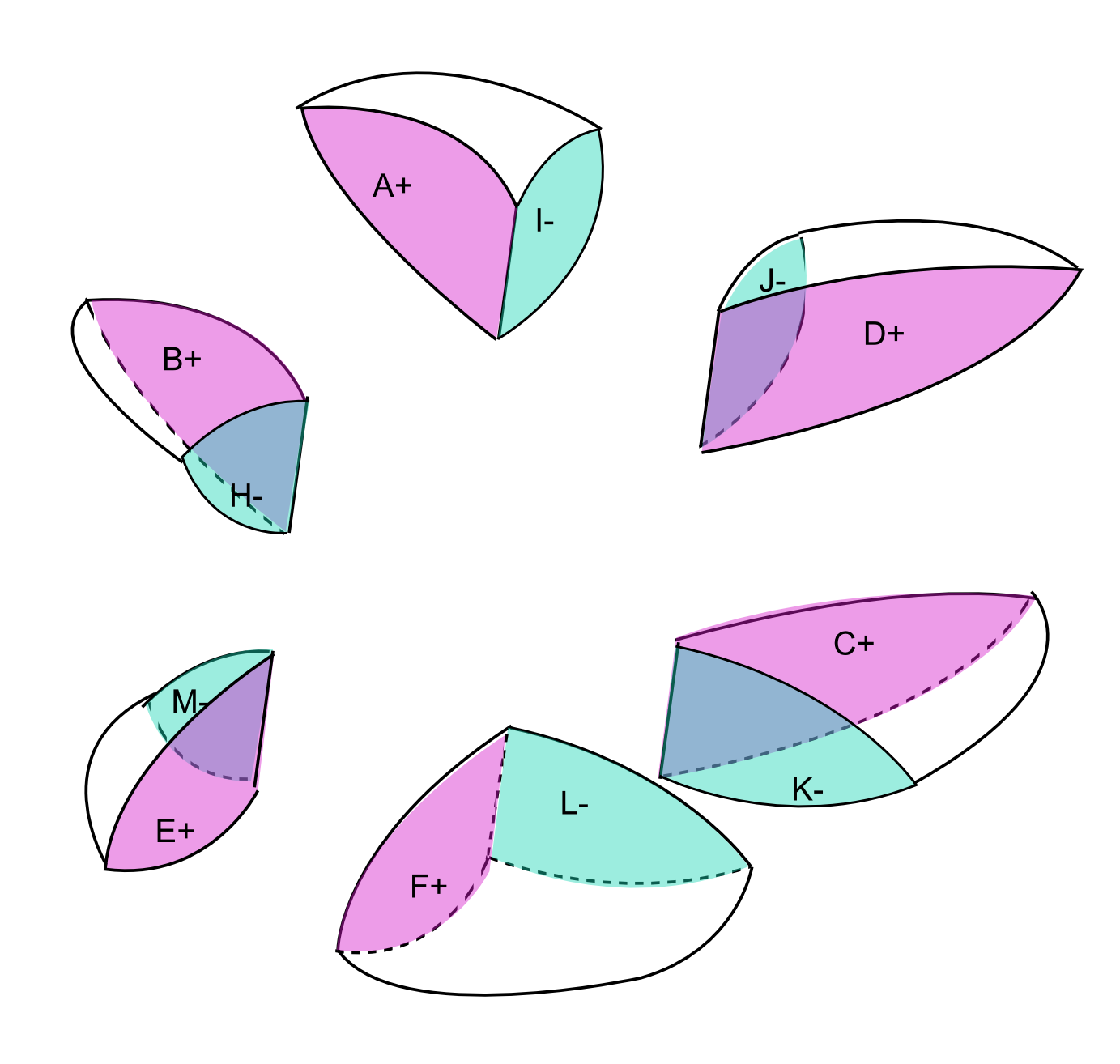}
	\includegraphics[width=0.49\textwidth]{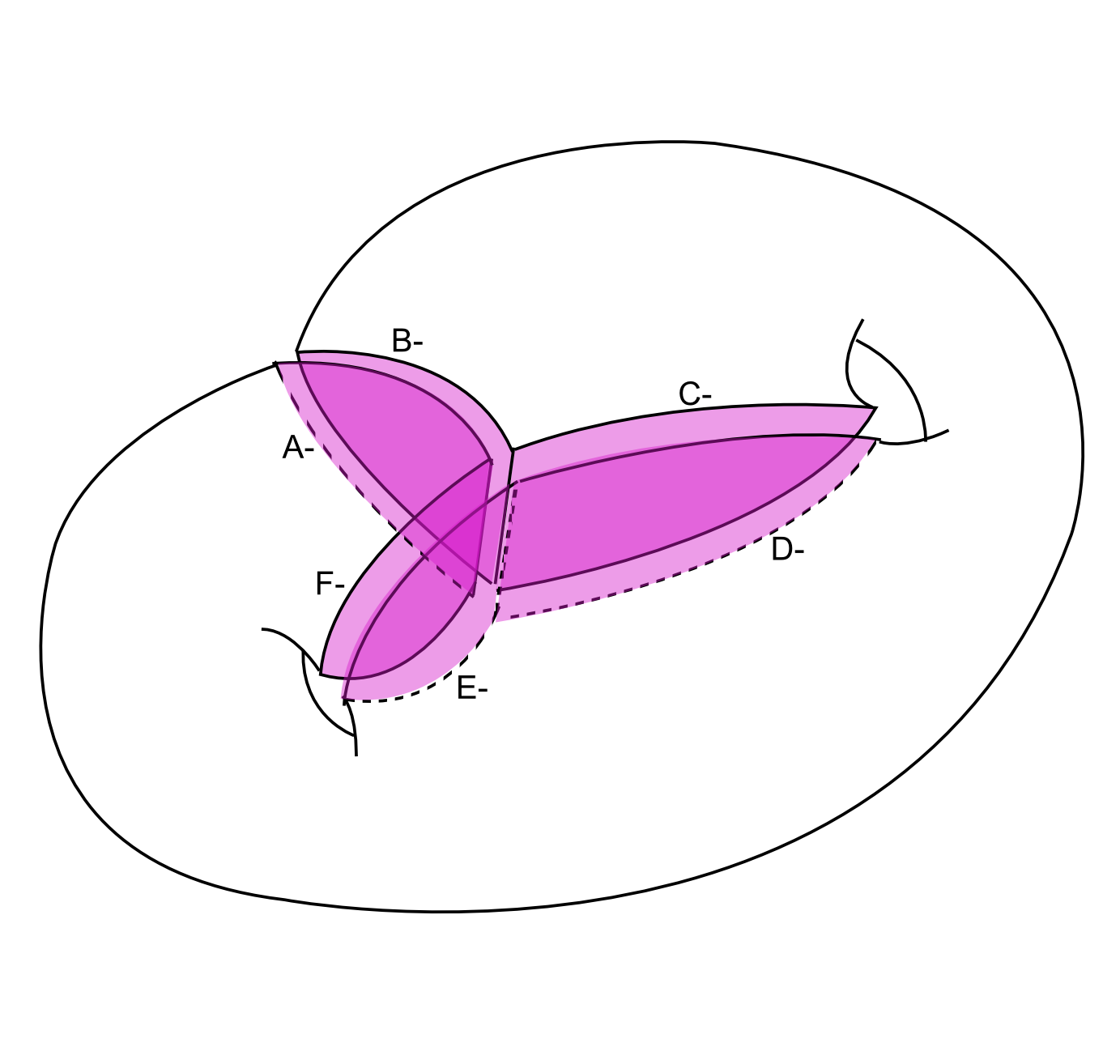}
	\end{center}
	\caption{The construction of the extension of $f$ related to $\{(\tilde{X}_1,X_1),(\tilde{X}_2,X_2),(\tilde{X}_3,X_3)\}$.}
\end{figure}

\section{The proof of Theorem 1}\label{section 5}\

In this section,
we will prove Theorem \ref{main theorem}.
If $f: \Sigma \to \mathbb{R}^{3}$ is an immersion of the closed oriented surface $\Sigma$
and $\zeta$ is an inscribed set of $f$,
then there exists a map $q: I(\zeta) \to E(f)$  (where $E(f)$ is the set of equivalence classes of immersions of $3$-manifolds to extend $f$)
sending each element of $I(\zeta)$ to the extension of $f$ related to it.
We prove that $q$ is injective in Lemma \ref{injective},
and we prove that $q$ is surjective in Proposition \ref{surjective}.

\begin{lm}\label{injective}
	For a closed oriented surface $\Sigma$,
	let $f: \Sigma \to \mathbb{R}^{3}$ be an immersion and $\zeta$ an inscribed set of $f$.
	Assume $n = \max_{x \in \mathbb{R}^{3} \setminus f(\Sigma)} \omega(f,x)$.
	If $\{(\tilde{X}_1,X_1),\ldots,(\tilde{X}_n,X_n)\},\{(\tilde{Y}_1,Y_1),\ldots,(\tilde{Y}_n,Y_n)\}$
	are $2$ different elements of $I(\zeta)$,
	then the $2$ extensions of $f$ related to $\{(\tilde{X}_1,X_1),\ldots,(\tilde{X}_n,X_n)\},\{(\tilde{Y}_1,Y_1),\ldots,$ $(\tilde{Y}_n,Y_n)\}$ are inequivalent.
\end{lm}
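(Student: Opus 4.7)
The plan is to assume for contradiction that the extensions $g^{(1)}_1 \colon M_1 \to \mathbb{R}^3$ and $g^{(2)}_1 \colon M_2 \to \mathbb{R}^3$ related to the two tuples are equivalent via a PL homeomorphism $h \colon M_1 \to M_2$ with $g^{(2)}_1 \circ h = g^{(1)}_1$, and then deduce that the two tuples must coincide. Recall from Definition \ref{inscribed map} that each $M_i$ is built by cutting and regluing embedded copies $D_1,\ldots,D_n$ of $D_1(f),\ldots,D_n(f)$ along the 2-complexes $X_k^{(i)}$; this equips $M_i$ with a natural decomposition $M_i = \bigcup_{k=1}^n L_k^{(i)}$ into ``layers'', where $L_k^{(i)}$ is the image in $M_i$ of the $k$-th cut copy. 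Each layer satisfies $g^{(i)}_1(L_k^{(i)}) = D_k(f)$, and the overlap $L_k^{(i)} \cap L_{k-1}^{(i)}$ is the gluing locus whose image under $g^{(i)}_1$ is precisely $X_k^{(i)}$.

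The core step is to describe each $L_k^{(i)}$ intrinsically in terms of the pair $(g^{(i)}_1, \partial M_i)$. Over a generic point $p \in \mathrm{Int}(D_k(f)) \setminus D_{k+1}(f)$, the fiber $g^{(i)-1}_1(p)$ has exactly $\omega(f,p) = k$ points, one from each of $L_1^{(i)}, \ldots, L_k^{(i)}$. I would distinguish the $L_k^{(i)}$-preimage as follows: choose a short path in $D_k(f) \setminus D_{k+1}(f)$ from $p$ to a generic point $q \in \partial D_k(f) \setminus (G_k(f) \cup G_{k+1}(f))$ and lift it to $M_i$; exactly one of the $k$ lifted arcs terminates on $\partial M_i$, and this arc is the one coming from $L_k^{(i)}$, since $\partial L_k^{(i)}$ lies above $\partial D_k(f) \subseteq f(\Sigma) = g^{(i)}_1(\partial M_i)$, whereas for $j < k$ the $L_j^{(i)}$-arcs continue through $\partial D_k(f)$ into the shallower region $g^{(i)-1}_1(\mathrm{Int}(D_{k-1}(f)) \setminus D_k(f))$. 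Taking the closure in $M_i$ over all such $p$ recovers $L_k^{(i)}$.

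Because this description depends only on $g^{(i)}_1$ and $\partial M_i$, both of which are respected by the equivalence $h$, we obtain $h(L_k^{(1)}) = L_k^{(2)}$ for every $k$, and hence
\[
X_k^{(1)} = g^{(1)}_1(L_k^{(1)} \cap L_{k-1}^{(1)}) = g^{(2)}_1(L_k^{(2)} \cap L_{k-1}^{(2)}) = X_k^{(2)}
\]
for every $k \in \{2, \ldots, n\}$. For the companion data $\tilde{X}_k$: by Definition \ref{inscribed set}, $\tilde{X}_n$ is chosen once in constructing $\zeta$, and for each $k < n$ the complex $\tilde{X}_k$ is chosen once for each ancestor $A = \{(\tilde{X}_{k+1}, X_{k+1}), \ldots, (\tilde{X}_n, X_n)\} \in \zeta_{k+1}$. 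Downward induction on $k$ from $n$, using the equality of the $X_j$'s for $j > k$ just established, forces both tuples to share the same ancestor at every level, so $\tilde{X}_k^{(1)} = \tilde{X}_k^{(2)}$. The two tuples therefore coincide, contradicting the hypothesis.

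The main obstacle I anticipate is a rigorous verification of the intrinsic characterization of $L_k^{(i)}$, in particular near the loci $G_k(f) \cup G_{k+1}(f) \subseteq \partial D_k(f)$ and at points where $X_k^{(i)}$ meets $X_j^{(i)}$ for some $j \neq k$, where several sheets can collide. A careful local analysis based on the $(M,G)$-simple structure from Section \ref{section 3} together with the local pictures in Figures \ref{points in simple complex} and \ref{N} should confirm that the terminating-arc criterion unambiguously isolates the $L_k^{(i)}$-sheet everywhere; the remaining steps are then routine bookkeeping.
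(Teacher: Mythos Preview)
Your overall strategy---assume an equivalence $h$ and deduce that the two tuples agree---is the natural contrapositive of what the paper does, but your intrinsic characterization of the layers $L_k^{(i)}$ has a genuine gap, and it is more pervasive than the boundary phenomena you flag. By the gluing rule in Definition~\ref{inscribed map}, along each $2$-cell of $X_k^{(i)}$ the left copy in $D_k'$ is identified with the right copy in $D_{k-1}'$ and vice versa; hence any lift in $M_i$ of a path in $D_k(f)\setminus D_{k+1}(f)$ swaps between layers $k$ and $k-1$ each time the path crosses $X_k^{(i)}$. If your path from $p$ to $q\in\partial D_k(f)$ meets $X_k^{(i)}$ an odd number of times, the lift terminating on $\partial M_i$ traces back to the $L_{k-1}^{(i)}$-preimage of $p$, not the $L_k^{(i)}$-one. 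Since $X_k^{(1)}$ and $X_k^{(2)}$ are a priori different, no single path is guaranteed to avoid both, so the terminating-arc criterion is not intrinsic to $(g_1^{(i)},\partial M_i)$ and $h(L_k^{(1)})=L_k^{(2)}$ does not follow.

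The paper sidesteps this by localizing at the largest $k$ with $X_k\ne Y_k$; then $X_j=Y_j$ for all $j>k$ and also $\tilde X_k=\tilde Y_k$, so the chambers $D_\pm(\gamma)$ (closures of components of $D_i(f)\setminus(\tilde X_i\cup X_{i+1})$ for $i\geqslant k$) are \emph{common} to both extensions. Picking a $2$-cell $\alpha\subseteq X_k\setminus Y_k$, the paper threads two arcs $h_1,h_2$ from a point $s\in\alpha$ outward to boundary points $p_1\in\partial D_m(f)$ and $p_2\in\partial D_q(f)$, each segment confined to a single such chamber. The lifts ending on $\partial M$ are then unambiguous in both $M_1$ and $M_2$, and one checks directly that the concatenated arc $h_1\cup h_2$ admits a properly embedded lift in $M_2$ (the two halves meet over $s$ because $\alpha\not\subseteq Y_k$) but not in $M_1$ (they land on distinct preimages of $s$ because $\alpha\subseteq X_k$). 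This single arc distinguishes the extensions without any global layer characterization.
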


\begin{proof}
	The proof is similar to the proof of \hyperref[Zhao]{[9, Lemma 6.1]}.
	
	Assume $g_1: M_1 \to \mathbb{R}^{3},g_2: M_2 \to \mathbb{R}^{3}$ are the extensions related to 
	$\{(\tilde{X}_1,X_1),\ldots,(\tilde{X}_n,X_n)\}$,
	$\{(\tilde{Y}_1,Y_1),\ldots,(\tilde{Y}_n,Y_n)\}$.
	Then there exists $k \in \{2,3,\ldots,n\}$ such that
	$X_k \ne Y_k$ and $X_i = Y_i$ for each $k+1 \leqslant i \leqslant n$.
	Note that $\tilde{X}_k = \tilde{Y}_k$ (since $\tilde{X}_k$ is yielded by $\{(\tilde{X}_{k+1},X_{k+1}),\ldots,(\tilde{X}_n,X_n)\}$, and $\tilde{Y}_k$ is yielded by $\{(\tilde{Y}_{k+1},Y_{k+1}),\ldots,(\tilde{Y}_n,Y_n)\}$).
	So there exists $\alpha$ a $2$-cell of $\tilde{X}_k$ such that
	$\alpha$ is contained by exactly one of $X_k, Y_k$.
	Assume without loss of generality that $\alpha \subseteq X_k, \alpha \nsubseteq Y_k$.
	
	For each $\gamma$ a $2$-cell of $\tilde{X}_i$ ($i \in \{2,\ldots,n\}$),
	we denote by $D_+(\gamma)$ (respectively $D_-(\gamma)$) the closure of the component of  $D_i(f) \setminus (\tilde{X_i} \cup X_{i+1})$ which lie in the left side (respectively the right side) of $\gamma$.
	Then $\partial D_+(\gamma) \cap (X_{i+1} \cup \partial D_{i}(f)), \partial D_-(\gamma) \cap (X_{i+1} \cup \partial D_{i}(f)) \ne \emptyset$.
    
	There exist $m \in \{k,k+1,\ldots,n\}$ and $p_1 \in \partial D_m(f) \setminus G_m(f)$
    such that:
    for each $t \in \{1,\ldots,m-k-1\}$,
    $\exists$ $\alpha_t$ a $2$-cell in $X_{k+t}$ such that $\alpha_t \subseteq D_+(\alpha_{t-1})$,
    $\alpha_0 = \alpha$,
    and $p_1 \in D_+(\alpha_{m-k-1})$.
    Choose $s \in \alpha$.
    There exists an immersion $h_1: [0,1] \to \mathbb{R}^{3}$ such that
    $h_1(\frac{i}{m-k}) \in \alpha_i$,
    $[\frac{i}{m-k},\frac{i+1}{m-k}]$ is homeomorphically embedded into $D_+(\alpha_{i})$ by $h_1$ ($0 \leqslant i \leqslant m-k-1$),
    $h_1(0) = s$,
    $h_1(1) = p_1$.
    Similarly,
    there exists $q \in \{k,k+1,\ldots,n\}$ and $p_2 \in \partial D_q(f) \setminus G_q(f)$,
    such that: 
    for each $t \in \{1,\ldots,q-k-1\}$,
    $\exists$ $\beta_t$ a $2$-cell in $X_{k+t}$ such that $\beta_t \subseteq D_-(\beta_{t-1})$,
    $\beta_0 = \alpha$,
    and $p_2 \in D_-(\alpha_{q-k-1})$.
    Let $h_2: [0,1] \to \mathbb{R}^{3}$ be an immersion such that
    $h_2(\frac{i}{q-k}) \in \beta_i$,
    $[\frac{i}{q-k},\frac{i+1}{q-k}]$ is homeomorphically embedded into $D_-(\alpha_{i})$ by $h_2$ ($0 \leqslant i \leqslant q-k-1$),
    $h_2(0) = s$,
    $h_2(1) = p_2$.
    
    There exist embeddings $\tilde{h}_{1}: [0,1] \to M_1,\tilde{h}_{2}: [0,1] \to M_2$ such that $g_1 \circ \tilde{h}_1 = g_2 \circ \tilde{h}_2 = h_1$,
    $\tilde{h}_{1}(1) \in \partial M_1,\tilde{h}_{2}(1) \in \partial M_2$.
    And there exists embeddings $\tilde{h}_{3}: [0,1] \to M_1,\tilde{h}_{4}: [0,1] \to M_2$ such that $g_1 \circ \tilde{h}_3 = g_2 \circ \tilde{h}_4 = h_2$,
    $\tilde{h}_{3}(1) \in \partial M_1,\tilde{h}_{4}(1) \in \partial M_2$.
    Then $\tilde{h}_1(0) \ne \tilde{h}_3(0)$,
    $\tilde{h}_2(0) = \tilde{h}_4(0)$ (since $\alpha \subseteq X_k, \alpha \nsubseteq Y_k$).
    So there exists $\tilde{h}_2([0,1]) \cup \tilde{h}_4([0,1])$ a properly embedded arc of $M_2$ mapped to $h_1([0,1]) \cup h_2([0,1])$ by $g_2$,
    but there is no properly embedded arc of $M_1$ mapped to $h_1([0,1]) \cup h_2([0,1])$ by $g_1$. 
    Hence $g_1,g_2$ are inequivalent.
\end{proof}

\begin{prop}\label{surjective}
	For a closed oriented surface $\Sigma$,
	let $f: \Sigma \to \mathbb{R}^{3}$ be an immersion and $\zeta$ an inscribed set of $f$.
	Assume $n = \max_{x \in \mathbb{R}^{3} \setminus f(\Sigma)} \omega(f,x)$.
	If $g: M \to \mathbb{R}^{3}$ is an immersion of a compact $3$-manifold $M$ such that $g \mid_{\partial M} = f$,
	then there exists $\{(\tilde{X}_1,X_1),\ldots,(\tilde{X}_n,X_n)\} \in I(\zeta)$,
	such that $g$ is the extension of $f$ related to $\{(\tilde{X}_1,X_1),\ldots,(\tilde{X}_n,X_n)\}$.
\end{prop}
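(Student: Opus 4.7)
The plan is to construct the data $\{(\tilde{X}_k, X_k)\}_{k=1}^n \in I(\zeta)$ by induction on $k$ from $n$ down to $1$, performing a regular cancellation (Definition \ref{cancellation operation}) of $g$ at each level so that the associated $2$-complex of the cancellation matches the fixed $\tilde{X}_k$ prescribed by $\zeta$, up to a good subcomplex. Reversing these $n$ cancellations via the gluing recipe in Definition \ref{inscribed map} will then recover an inscribed map equivalent to $g$.

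For the base case $k = n$: since $g$ is an honest immersion, Lemma \ref{case 1 appropriate}(i) gives that $(R(g), G(g)) = (D_n(f), G_n(f))$ is appropriate, and $\zeta_n$ prescribes a fixed $(D_n(f), G_n(f))$-simple $2$-complex $\tilde{X}_n$. Apply Lemma \ref{case 1 appropriate}(ii) with $X = \tilde{X}_n$ to obtain cancellable domains in $M$ with $\tilde{X}_n$ as their associated $2$-complex and regular cancellation. Carry it out to get $(g^{(1)}, K^{(1)})$ with $R(g^{(1)}) = D_{n-1}(f)$ (by Remark \ref{regular means}(a)), and set $X_n := \tilde{X}_n \in \mathrm{sub}(\tilde{X}_n)$.

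For the inductive step $k < n$: given $(g^{(n-k)}, K^{(n-k)})$ and $(\tilde{X}_j, X_j)$ for $j > k$, Remark \ref{regular means}(b)--(d) places $X_{k+1}$ as a good $2$-complex in $R(g^{(n-k)}) = D_k(f)$ sitting on $\partial K^{(n-k)}$ along $G_k(f)$. Applying the argument of Lemma \ref{case 1 appropriate}(i) in the relative setting to the branched immersion $g^{(n-k)}$ shows that $(D_k(f), X_{k+1}, N \cup G_k(f))$ is appropriate, with $N = \overline{S(X_{k+1}) \setminus S(X_{k+2})}$; this is precisely the condition in Definition \ref{inscribed set} that picks the fixed $\tilde{X}_k$ in $\zeta_k$. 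The relative form of Lemma \ref{case 1 appropriate}(ii), together with Corollary \ref{exists-thin regular}, then produces cancellable domains in $K^{(n-k)}$ with $\tilde{X}_k$ as associated $2$-complex and regular cancellation; carry it out to obtain $(g^{(n-k+1)}, K^{(n-k+1)})$ and define $X_k \in \mathrm{sub}_{X_{k+1}}(\tilde{X}_k)$ to be the union of those $2$-cells of $\tilde{X}_k$ whose two sides are not identified by the cancellation of $g^{(n-k)}$. At $k = 1$ the outermost region leaves no sheets to separate, forcing $X_1 = \emptyset$, so $\{(\tilde{X}_k, X_k)\}_{k=1}^n \in I(\zeta)$.

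The main obstacle will be the alignment step in the inductive case: the simple $2$-complex naturally produced by a cancellation of $g^{(n-k)}$ is \emph{some} $(D_k(f), X_{k+1}, N \cup G_k(f))$-simple complex, but not literally the $\tilde{X}_k$ fixed a priori in $\zeta$. One must show any two such simple complexes are related by an ambient homeomorphism of $D_k(f)$ rel $\partial D_k(f) \cup X_{k+1}$, obtained by matching connected lifts in the covering-space construction in the proof of Proposition \ref{exists-regular}; this ambient homeomorphism then lifts to a domain-side homeomorphism realising the equivalence of the two extensions. The argument will mirror \hyperref[Zhao]{[9, Lemma 6.1]} in dimension $2$, with the added combinatorial bookkeeping at triple points of $f(\Sigma)$ and along the thin trivalent graph refinements of Definition \ref{thin trivalent graph} needed to compare successive levels $X_k, X_{k+1}$.
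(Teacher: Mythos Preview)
Your overall architecture is correct and matches the paper: a downward induction in $k$, at each step cancelling cancellable domains whose associated $2$-complex is the $\tilde X_k$ already fixed by $\zeta$, and recording the resulting $X_k$. But there are two genuine problems.

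First, the base case is wrong. You set $X_n := \tilde X_n$, but this is not what the cancellation actually produces. After choosing cancellable domains $A_{(n,1)},\ldots,A_{(n,t_n)}$ with associated complex $\tilde X_n$, the correct object is $X_n = X(A_{(n,1)},\ldots,A_{(n,t_n)})$ in the sense of Definition~\ref{cancellable components}(ii): the image of $\overline{\partial(\bigcup_j A_{(n,j)})\setminus\partial M}$. Whenever two of the $A_{(n,j)}$ are adjacent in $M$ across a $2$-cell of $\tilde X_n$, that cell is \emph{absent} from $X_n$; so $X_n$ is typically a proper good subcomplex of $\tilde X_n$. If you force $X_n=\tilde X_n$, the inscribed map built from $\{(\tilde X_k,X_k)\}$ will have extra cuts that $g$ does not have, and you will not recover $g$. (Your inductive definition of $X_k$ for $k<n$ is closer to the right thing; the base case should be treated the same way.)

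Second, your ``main obstacle'' is a non-issue, and it hides the real work. There is no alignment problem: since $\zeta$ \emph{prescribes} $\tilde X_k$, you simply take it as the associated complex of your cancellable domains; nothing forces you to compare two different simple $2$-complexes by an ambient homeomorphism. What actually needs to be proved at each step, and what the paper tracks as Induction Hypotheses (i)--(v), is: (a) the branch set of $g_{n-k}$ is exactly $\overline{S(X_{n-k+1})\setminus S(X_{n-k+2})}$ with the prescribed indices; (b) the cancellation at level $n-k$ is regular, including at points of $S(X_{n-k})\cap S(X_{n-k+1})$; (c) the combinatorial condition (iv) on how $h_{n-k}$ distributes the three cells around an edge of the branch set, which is precisely what guarantees $X_{n-k}\in sub_{X_{n-k+1}}(\tilde X_{n-k})$; and (d) $(D_{n-k}(f),X_{n-k+1},G_{n-k}(f)\cup B_{n-k})$ is appropriate, proved by exhibiting the branched immersion $g_{n-k}$ itself as the source of the covering in Lemma~\ref{appropriate = lifted}. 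None of this is addressed by the ambient-homeomorphism idea. Finally, $X_1=\emptyset$ is not just ``forced'' by there being no sheets: one argues (as the paper does) that the preimage-plus-index count drops by one at each cancellation, so $g_1$ is an embedding, whence $X_1=\emptyset$.
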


\begin{proof}
	First,
	we will construct a sequence of cancellation operations $(g,M) \leadsto (g_{n-1},K_{n-1}) 
	\leadsto
	(g_{n-2},K_{n-2}) \leadsto \ldots 
	\leadsto (g_1,K_1)$ in the following
	(where $K_j$ is a branched $3$-manifold,
	$g_j$ is a branched immersion, 
	$g_j(\partial K_j) = \bigcup_{i=1}^{j}\partial D_i(f)$,
	$K_1$ is a $3$-manifold and $g_1$ is an embedding):
	
	\emph{Step $1$.}
	By Lemma \ref{case 1 appropriate} (i), 
	$(R(g),G(g)) = (D_n(f),G_n(f))$ is appropriate.
	So $\zeta$ yields $\tilde{X}_n$ a $(D_n(f),G_n(f))$-simple $2$-complex.
	Assume $A_{(n,1)},\ldots,A_{(n,t_n)}$ are cancellable domains such that
	$\tilde{X}_n$ is the $2$-complex associated to them ($A_{(n,1)},\ldots,A_{(n,t_n)}$ exist and they are determined uniquely by $\tilde{X}_n$, 
	see Lemma \ref{case 1 appropriate} (ii) and Remark \ref{determine cancellable domains} (a)).
	Let $X_n = X(A_{(n,1)},\ldots,A_{(n,t_n)}) \in sub(X_n)$.
	Then $\{(X_n,\tilde{X}_n)\} \in \zeta_n$.
	
	We cancel $\{A_{(n,1)},\ldots,A_{(n,t_n)}\}$.
	The cancellation $(g,M) \stackrel{\{A_{(n,1)},\ldots,A_{(n,t_n)}\}}{\leadsto} (g_{n-1},K_{n-1})$ produces a branched immersion
	$g_{n-1}: K_{n-1} \to \mathbb{R}^{3}$ such that:
	
	\emph{Property} (a):
	$g_{n-1}(\partial K_{n-1}) = \bigcup_{i=1}^{n-1}\partial D_{i}(f)$.
	
	\emph{Property} (b):
	The embedded graph $S(X_n)$ is the branch set of $g_{n-1}$.
	For each $x \in S(X_n)$,
	$x$ has index $1$ if
	$x \in S(X_n) \setminus \{v \in V(S(X_n)) \mid deg_{S(X_n)}(v) > 2\}$,
	and $x$ has index $2$ if
	$x \in \{v \in V(S(X_n)) \mid deg_{S(X_n)}(v) > 2\}$.
	
	\emph{Property} (c):
	The cancellation is regular (Lemma \ref{case 1 appropriate} (ii)).
	Let $h_{n-1}: X_n \to K_{n-1}$ denote the associated map (Definition \ref{cancellation operation} (iii)) of the cancellation.
	Then $h_{n-1}(X_n \cap D_{n-1}(f)) = h_{n-1}(X_n) \cap \partial K_{n-1}$ (Remark \ref{regular means} (d)).
	And $h_{n-1}(S(X_n))$ is the singular set of $g_{n-1}$.
	
	\emph{Property} (d):
	For each $e \in E(S(X_n))$,
	there exists three $2$-cells $\alpha_1,\alpha_2,\alpha_3$ of $X_n$ 
	such that $e \subseteq \overline{\alpha_1}, \overline{\alpha_2}, \overline{\alpha_3}$,
	and assume that $\alpha_1,\alpha_2,\alpha_3$ are in clockwise.
	Assume $e_0 = h_{n-1}(e)$. 
	Assume $\beta_1,\ldots,\beta_6$ are the components of $g_{n-1}^{-1}(\alpha_1),g_{n-1}^{-1}(\alpha_2),g_{n-1}^{-1}(\alpha_3)$ 
	such that $e_0 \subseteq \overline{\beta_i}$ ($i = 1,\ldots,6$),
	and  $\beta_1,\ldots,\beta_6$ are in clockwise.
	Assume without loss of generality $h_{n-1}(\alpha_1) = \beta_1$ (then $\beta_1, \beta_4 \subseteq  g_{n-1}^{-1}(\alpha_1)$, $\beta_2, \beta_5 \subseteq  g_{n-1}^{-1}(\alpha_2)$, $\beta_3, \beta_6 \subseteq  g_{n-1}^{-1}(\alpha_3)$).
	Then $h_{n-1}(\alpha_2) = \beta_5$, $h_{n-1}(\alpha_3) = \beta_3$.
	
	\emph{Property} (e):
	$(D_{n-1}(f),X_n,G_{n-1}(f) \cup S(X_n))$ is appropriate. We explain this as follows:
	
	If $L$ is one of the components obtained by cutting off $X_n$ from $D_{n-1}(f)$,
	assume $i: L \to D_{n-1}(f)$ is the continuous map induced by the ``cutting off'' ($i \mid_{Int(L)}$ is an inclusion map, and $i(Int(L))$ is one of the components of $D_{n-1}(f) \setminus X_n$).
	Assume $S = \overline{\{x \in \partial L \mid i(x) \in Int(i(L))\}}$.
	Let $L_0$ be the space obtained by cutting off $g_{n-1}^{-1}(i(L)) \cap (h_{n-1}(S(X_n)) \cup g_{n-1}^{-1}(i(S)))$ from $g_{n-1}^{-1}(i(L))$,
	and assume $j: L_0 \to g_{n-1}^{-1}(i(L))$ is the continuous map induced by the ``cutting off'' ($j \mid_{Int(L_0)}$ is an inclusion map).
	Then $L_0$ is a $3$-manifold that may be disconnected,
	and there exists a covering map $g^{'}: L_0 \to L$ such that $g^{'} \mid_{g_{n-1}^{-1}(Int(i(L)))} = i^{-1} \circ g_{n-1} \circ j \mid_{g_{n-1}^{-1}(Int(i(L)))}$.
	For each component $A$ of $\partial L \setminus i^{-1}(G_{n-1}(f) \cup S(X_n))$,
	assume $A_0 = \overline{g^{'-1}(A) \cap j^{-1}(h_{n-1}(X_n) \cup \partial K_{n-1})}$,
	then $g^{'}$ maps $Int(A_0)$ homeomorphically to $A$.
	By Lemma \ref{appropriate = lifted},
	$(L, i^{-1}(G_{n-1}(f) \cup S(X_n)))$ is appropriate.
	So $(D_{n-1}(f),X_n,G_{n-1}(f) \cup S(X_n))$ is appropriate.
	
	\emph{Step $k+1$.}
	After $k$ steps,
	assume we obtain a branched immersion $g_{n-k}: K_{n-k} \to \mathbb{R}^{3}$ 
	from a sequence of cancellation operation
	\begin{center}
	$(g,M) \stackrel{\{A_{(n,1)},\ldots,A_{(n,t_n)}\}}{\leadsto} (g_{n-1},K_{n-1})
\stackrel{\{A_{(n-1,1)},\ldots,A_{(n-1,t_{n-1})}\}}{\leadsto} \ldots$
$\stackrel{\{A_{(n-k+1,1)},\ldots,A_{(n-k+1,t_{n-k+1})}\}}{\leadsto} (g_{n-k},K_{n-k})$
	\end{center}
	and the following hold:
	
	\emph{Induction hypothesis} (i):
	$g(\partial K_{n-k}) = \bigcup_{i=1}^{n-k} \partial D_{i}(f)$ (then $R(g_{n-k}) = D_{n-k}(f)$, $G(g_{n-k}) = G_{n-k}(f)$).
	
	\emph{Induction hypothesis} (ii):
	$B_{n-k} = \overline{S(X_{n-k+1}) \setminus S(X_{n-k+2})}$ is the branched set of $g_{n-k}$, and $B_{n-k}$ is an embedded graph.
	For each $x \in B_{n-k}$,
	$x$ has index $2$ if $x$ is a vertex of $S(X_{n-k+1})$ with degree greater than $2$ and $x \notin S(X_{n-k+2})$,
	otherwise $x$ has degree $1$.
	
	\emph{Induction hypothesis} (iii):
	There exists $h_{n-k}: X_{n-k+1} \to K_{n-k}$ an embedding such that
	$g_{n-k} \circ h_{n-k} = id$,
	$h_{n-k}(X_{n-k+1} \cap \partial D_{n-k}(f)) = h_{n-k}(X_{n-k+1}) \cap \partial K_{n-k}$,
	and $h_{n-k}(B_{n-k})$ is the singular set of $g_{n-k}$.
	
	\emph{Induction hypothesis} (iv):
	For each $e \in E(B_{n-k})$,
	assume $\alpha_1,\alpha_2,\alpha_3$ are the three $2$-cells of $X_{n-k+1}$ 
	such that $e \subseteq \overline{\alpha_1}, \overline{\alpha_2}, \overline{\alpha_3}$,
	and $\alpha_1,\alpha_2,\alpha_3$ are in clockwise.
	Assume $e_0 = h_{n-k}(e)$. 
	Assume $\beta_1,\ldots,\beta_6$ are the components of $g_{n-k}^{-1}(\alpha_1),g_{n-k}^{-1}(\alpha_2),g_{n-k}^{-1}(\alpha_3)$ 
	such that $e_0 \subseteq \overline{\beta_i}$ ($i = 1,\ldots,6$),
	and $\beta_1,\ldots,\beta_6$ are in clockwise.
	Assume without loss of generality $h_{n-k}(\alpha_1) = \beta_1$ (then $\beta_1, \beta_4 \subseteq  g_{n-k}^{-1}(\alpha_1)$, $\beta_2, \beta_5 \subseteq  g_{n-k}^{-1}(\alpha_2)$, $\beta_3, \beta_6 \subseteq  g_{n-k}^{-1}(\alpha_3)$).
	Then $h_{n-k}(\alpha_2) = \beta_5$, $h_{n-k}(\alpha_3) = \beta_3$.
	
	\emph{Induction hypothesis} (v):
	$(D_{n-k}(f),X_{n-k+1},G_{n-k}(f) \cup B_{n-k})$ is appropriate.
	
	By Corollary \ref{exists-thin regular},
	$\{(\tilde{X}_{n-k+1},X_{n-k+1}),\ldots(\tilde{X}_n,X_n)\}$ yields a $(D_{n-k}(f),X_{n-k+1},G_{n-k}(f) \cup B_{n-k})$-simple $2$-complex $\tilde{X}_{n-k}$.
	Let $A_{(n-k,1)},\ldots,A_{(n-k,t_{n-k})}$ be the cancellable domains such that:
	$\tilde{X}_{n-k}$ is the $2$-complex associated to them,
	and for each $j \in \{1,2,\ldots,t_{n-k}\}$,
	$A_{(n-k,j)} \cap (\partial M_{n-k} \cup h_{n-k}(X_{n-k+1})) \ne \emptyset$
	($A_{(n-k,1)},\ldots,A_{(n-k,t_{n-k})}$ exist, similar to Lemma \ref{case 1 appropriate} (ii);
	and $A_{(n-k,1)},\ldots,$ $A_{(n-k,t_{n-k})}$ are uniquely determined, see Remark \ref{determine cancellable domains} (b)).
	Let $X_{n-k} = X(A_{(n-k,1)},\ldots,A_{(n-k,t_{n-k})})$.
	Because of induction hypothesis (iv),
	$X_{n-k} \in sub_{X_{n-k+1}}(\tilde{X}_{n-k})$.
	So $\{(\tilde{X}_{n-k},X_{n-k}),(\tilde{X}_{n-k+1},X_{n-k+1}),$ $\ldots,(\tilde{X}_n,X_n)\} \in \zeta_{n-k}$.
	
	We cancel $\{A_{(n-k,1)},\ldots,A_{(n-k,t_{n-k})}\}$.
	The cancellation $(g_{n-k},K_{n-k})
	\stackrel{\{A_{(n-k,1)},\ldots,A_{(n-k,t_{n-k})}\}}{\leadsto} (g_{n-k-1},$ $K_{n-k-1})$
	gives a branched immersion $g_{n-k-1}: K_{n-k-1} \to \mathbb{R}^{3}$.
	First,
	we verify that the cancellation is regular:
	
	$\bullet$
	If $x \in S(X_{n-k}) \cap S(X_{n-k+1})$,
	then $x \in B_{n-k}$ and we denote by $y$ the singular point of $g_{n-k}$ mapped to $x$.
	Then $y \in A_{(n-k,j)}$ if $x \in g_{n-k}(A_{(n-k,j)})$ ($\forall j \in \{1,2,\ldots,t_{n-k}\}$).
	Hence the cancellation of $\{A_{(n-k,1)},\ldots,A_{(n-k,t_{n-k})}\}$ is regular at $x$.
	
	$\bullet$
	Similar to Lemma \ref{case 1 appropriate} (ii),
	if $x \in S(X_{n-k}) \setminus S(X_{n-k+1})$,
	then the cancellation of $\{A_{(n-k,1)},\ldots,$ $A_{(n-k,t_{n-k})}\}$ is regular at $x$.
	 
	So the cancellation $(g_{n-k},K_{n-k})
	\stackrel{\{A_{(n-k,1)},\ldots,A_{(n-k,t_{n-k})}\}}{\leadsto} (g_{n-k-1},$ $K_{n-k-1})$
	is regular.
	Moreover, the following hold:
	
	\emph{Property} (a):
	$g(\partial K_{n-k-1}) = \bigcup_{i=1}^{n-k-1} \partial D_{i}(f)$.
	
	\emph{Property} (b):
	The branch points of $g_{n-k}$ with index $1$ are not branch points of $g_{n-k-1}$,
	and the branch points of $g_{n-k}$ with index $2$ are branch points of $g_{n-k-1}$ with index $1$.
	So the branch set $B_{n-k-1}$ of $g_{n-k-1}$ is $\overline{S(X_{n-k}) \setminus S(X_{n-k+1})}$,
	and for each $x \in B_{n-k-1}$,
	$x$ has index $2$ if $x \in \{v \in V(S(X_{n-k})) \mid deg_{S(X_{n-k})}(v) > 2\} \setminus S(X_{n-k+1})$,
	and $x$ has index $1$ otherwise. 
	
	\emph{Property} (c):
	Since the cancellation $(g_{n-k},K_{n-k})$
	$\stackrel{\{A_{(n-k,1)},\ldots,A_{(n-k,t_{n-k})}\}}{\leadsto} (g_{n-k-1},$ $K_{n-k-1})$
	is regular,
	there exists $h_{n-k-1}: X_{n-k} \to K_{n-k-1}$ the associated map of the cancellation.
	$h_{n-k-1}(X_{n-k} \cap \partial D_{n-k-1}(f)) = h_{n-k-1}(X_{n-k}) \cap \partial K_{n-k-1}$ (Remark \ref{regular means} (d)).
	And $h_{n-k-1}$ maps $B_{n-k-1}$ homeomorphically to the singular set of $g_{n-k-1}$.
	
	\emph{Property} (d):
	\emph{Induction hypothesis} (iv) is developed for $X_{n-k}$ and $B_{n-k-1}$.
	We state this again as follows:
	
	For each $e \in E(B_{n-k-1})$,
	there exists $\alpha_1,\alpha_2,\alpha_3$ the three $2$-cells of $X_{n-k}$ 
	such that $e \subseteq \overline{\alpha_1}, \overline{\alpha_2}, \overline{\alpha_3}$,
	and $\alpha_1,\alpha_2,\alpha_3$ are in clockwise.
	Assume $e_0 = h_{n-k-1}(e)$. 
	Assume $\beta_1,\ldots,\beta_6$ are the components of $g_{n-k-1}^{-1}(\alpha_1),g_{n-k-1}^{-1}(\alpha_2),g_{n-k-1}^{-1}(\alpha_3)$ 
	such that $e_0 \subseteq \overline{\beta_i}$ ($i = 1,\ldots,6$),
	and $\beta_1,\ldots,\beta_6$ are in clockwise.
	Assume without loss of generality $h_{n-k-1}(\alpha_1) = \beta_1$ (then $\beta_1, \beta_4 \subseteq  g_{n-k-1}^{-1}(\alpha_1)$, $\beta_2, \beta_5 \subseteq  g_{n-k-1}^{-1}(\alpha_2)$, $\beta_3, \beta_6 \subseteq  g_{n-k-1}^{-1}(\alpha_3)$).
	Then $h_{n-k-1}(\alpha_2) = \beta_5$, $h_{n-k-1}(\alpha_3) = \beta_3$.
	
	\emph{Property} (e):
	Similar to the \emph{Property} (e) of \emph{Step $1$},
	$(D_{n-k-1}(f),X_{n-k},G_{n-k-1}(f) \cup B_{n-k-1})$ is appropriate.
	
	Hence we can verify that all induction hypothesises will be developed in the next step (\emph{Step $k+2$}).
	
	In the end,
	we have constructed $\{(\tilde{X}_1,X_1),\ldots,(\tilde{X}_n,X_n)\} \in \zeta$
	with a sequence of cancellation operations
	\begin{center}
		$(g,M) \stackrel{\{A_{(n,1)},\ldots,A_{(n,t_n)}\}}{\leadsto} (g_{n-1},K_{n-1})
		\stackrel{\{A_{(n-1,1)},\ldots,A_{(n-1,t_{n-1})}\}}{\leadsto} \ldots
		\stackrel{\{A_{(2,1)},\ldots,A_{(2,t_2)}\}}{\leadsto} (g_1,K_1)$.
	\end{center}
    Note that
    $\#(g_{k-1}^{-1}(x)) + index_{g_{k-1}}(x) = \#(g_{k}^{-1}(x)) + index_{g_k}(x) -1$ for each
    $x \in D_k(x)$
    (where $index_{g_j}(x)$ is the index of $x$ if $x$ is a branch point of the map $g_j$, and $index_{g_j}(x) = 0$ if $x$ is not a branch point of the map $g_j$; and $g_n = g$).
    So $g_1: K_1 \to \mathbb{R}^{3}$ is an embedding.
    Then $X_1 = \emptyset$.
	Hence $\{(\tilde{X}_1,X_1),\ldots,(\tilde{X}_n,X_n)\} \in I(\zeta)$.
	
	From this sequence of cancellation operations,
	we can verify that $g: M \to \mathbb{R}^{3}$ is the inscribed map of $f$ associated to $\{(\tilde{X}_1,X_1),\ldots,(\tilde{X}_n,X_n)\}$,
	i.e. $g$ is the extension of $f$ related to $\{(\tilde{X}_1,X_1),\ldots,(\tilde{X}_n,X_n)\}$.
\end{proof}

\section{Acknowledgments}\label{section 6}\
This paper is the motivation for the author to write \hyperref[Zhao]{[9]}.
The author is grateful for Professor Shicheng Wang, Professor Yi Liu, Professor Jiajun Wang in these works,
and in my study.
Especially, the author thanks Professor Jiajun Wang for his comment that help me a lot.


\begin{thebibliography}{10}
	
\bibitem{AFW}
M. Aschenbrenner, S. Friedl, and H. Wilton,
\newblock{3-Manifold Groups}, 
EMS Series of Lectures in Mathematics,
2015.
\label{AFW}
	
\bibitem{Bailey}
K. D. Bailey, 
\newblock{Extending closed plane curves to immersions of the disk with n handles}, 
Trans. Amer. Math. Soc. 206 (1975), 1-24
\label{Bailey}

\bibitem{Blank}
S. J. Blank, 
\newblock{Extending Immersions and regular Homotopies in Codimension 1}, PhD thesis, Brandeis University, 1967 
\label{Blank}

\bibitem{GP}
V. Guillemin and A. Pollack, 
\newblock{Differential Topology}, 
Prentice-Hall, Englewood Cliffs
\label{GP}
 	
\bibitem{Kirby}
R. Kirby,
\newblock{Problems in Low-Dimensional Topoplogy}, (In R.Kirby, editor) Geometric topology (Athens, GA, 1993), AMS/IP Stud. Adv. Math. 2, Amer. Math. Soc., Providence, RI (1997) 35-473
\label{Kirby}

\bibitem{LV}
A. Lozano and R. Vigara,
\newblock{On the subadditivity of Montesinos complexity of closed orientable 3-manifolds},
Revista de la Real Academia de Ciencias Exactas, Físicas y Naturales. Serie A. Matemáticas
September 2015, Volume 109, Issue 2, pp 267–279
\label{LV}

\bibitem{Pappas}
C. Pappas, 
\newblock{Extensions of codimension one immersions}, 
Trans. Amer. Math. Soc. 348 (1996), 3065-3083
\label{Pappas}

\bibitem{Preaux}
J. -P. Preaux,
\newblock{Conjugacy problems in groups of oriented geometrizable $3$-manifolds},
Topology. 45 (2006), no. 1, 171-208
\label{Preaux}

\bibitem{Zhao}
B. Zhao,
\newblock{The extending surfaces of immersions into surfaces}, preprint
\label{Zhao}

\end{thebibliography}
\end{document}